\newtheorem{theorem}{Theorem}
\newtheorem{proposition}{Proposition}
\newtheorem*{conjecture*}{Conjecture}
\newtheorem*{coudene}{Coudène's Theorem}
\theoremstyle{definition}
\newtheorem{definition}{Definition}
\newtheorem{remark}{Remark}
\newtheorem{question}{Question}
\newcommand{\R}{\mathbb{R}}
\newcommand{\C}{\mathbb{C}}
\renewcommand{\H}{\mathbb{H}}
\newcommand{\D}{\mathbb{D}}
\newcommand{\Z}{\mathbb{Z}}
\newcommand{\F}{\mathcal{F}}
\newcommand{\scirc}{{\scriptscriptstyle \circ}}
\newcommand{\bs}{\backslash}
\newcommand{\vf}[1]{\frac{\partial~}{\partial #1}}
\title{Unique ergodicity of the horocycle flow on Riemannnian foliations}
\author[F. Alcalde]{Fernando Alcalde Cuesta}
\address{Instituto de Matem\'aticas, Universidade de Santiago de Compostela, E-15782, Santiago de Compostela, Spain. }
\email{fernando.alcalde@usc.es}
\author[F. Dal'Bo]{ Fran\c{c}oise Dal'Bo}
\address{Institut de Recherche Math\'ematique de Rennes, Universit\'e de  Rennes 1, F-35042 Rennes, France}
\email{francoise.dalbo@univ-rennes1.fr}
\author[M. Mart\'{\i}nez]{Matilde Mart\'{\i}nez}
\address{Instituto de Matem\'atica y Estad\'{\i}stica Rafael Laguardia, Facultad de Ingenier\'{\i}a,
Universidad de la Rep\'ublica, J.Herrera y Reissig 565, C.P.11300 Montevideo, Uruguay.}
\email{matildem@fing.edu.uy}
\author[A. Verjovsky]{Alberto Verjovsky}
\address{Universidad Nacional Aut\'onoma de M\'exico,
Apartado Postal 273, Admon. de correos \#3, C.P. 62251 Cuernavaca,
Morelos, Mexico.}
\email{alberto@matcuer.unam.mx}
\date{}
\begin{document}

\maketitle

\begin{abstract}
A classic result due to H.~Furstenberg is the strict ergodicity of the horocycle flow for a compact hyperbolic surface. Strict ergodicity is unique ergodicity with respect to a measure of full support, and therefore implies minimality. The horocycle flow has been previously studied on minimal foliations by hyperbolic surfaces on closed manifolds, where it is known not to be minimal in general. In this paper, we prove that for the special case of Riemannian foliations, strict ergodicity of the horocycle flow still holds. This in particular proves that this flow is minimal, which establishes a conjecture proposed by S.~Matsumoto. 

The main tool is a theorem due to Y.~Coudène, which he presented as an alternative proof for the surface case. It applies to two  continuous flows defining a measure-preserving action of the affine group of the line on a compact metric space, precisely matching the foliated setting.
In addition, we briefly discuss the application of Coudène's theorem to other kinds of foliations.
\end{abstract}
 
\section{Introduction and motivation}

The dynamics of the geodesic and horocycle flows for hyperbolic surfaces has been extensively studied since the seminal works of E. Hopf and G. Hedlund in the 1930s, both from the topological and measure-theoretical points of view. 
In the 1936 paper \cite{Hedlund}, Hedlund proved the minimality of the horocycle flow of any closed surface of constant negative curvature $S$. If $S$ is the quotient of the hyperbolic plane $\H$ by the action of a  torsion-free discrete subgroup $\Gamma$ of the group of orientation-preserving isometries $PSL(2,\R)$, the horocycle flow on the 
unit tangent bundle $T^1 S = \Gamma\backslash PSL(2,\R)$ is given by the right action of the unipotent group
$$U^+=\left\{ \left(
\begin{array}{cc}
 1&s\\
 0&1
\end{array}
\right):\ s\in\R
\right\} $$
on $PSL(2,\R)$. Minimality means that the $U^+$-orbits are dense in $\Gamma\backslash PSL(2,\R)$. 

The strict ergodicity of the horocycle flow was proved almost 40 years later by H. Furstenberg in \cite{Furstenberg2}. Recall that {\em strict ergodicity} is both minimality and unique ergodicity, and was defined in \cite{Furstenberg1}.  Involving techniques of harmonic analysis,  Furstenberg's proof is based on the duality between the right $U^+$-action on $\Gamma\backslash PSL(2,\R)$ and the action of $\Gamma$ on $E=\R^2-\{0\}/\{\pm Id\}$ deduced from the linear action of $SL(2,\R)$ on $\R^2$ (see \cite{Dal'Bo} for details  about this action). Since then this result has been generalised in different frameworks through different techniques by a number of authors (see for instance \cite{Coudene} and references therein).
\bigskip 

Our motivation is to study the dynamics of the horocycle flow  associated to a minimal foliation $\mathcal{F}$  by hyperbolic surfaces  of a closed manifold $M$ according to \cite{MMV}. Like for hyperbolic surfaces, the geodesic and horocycle flows $g_t$ and $h_s^+$ on the unit tangent bundle $\hat M = T^1\F$ are related by 
\begin{equation} \label{geodesichorocyclic}
g_t \scirc h_s^+ = h_{se^{-t}}^+ \scirc g_t,
\end{equation}
and hence provide a continuous joint action of the affine group
$$
B^+=\left\{ \left(
\begin{array}{cc}
 a& b\\
 0 &a^{-1}
\end{array}
\right):\ a,b\in\R, \ a > 0
\right\}.
$$
In general,
the minimality of the foliation $\mathcal{F}$ does not suffice to 
 obtain the minimality of the horocycle flow $h_s^+$ as in Hedlund's theorem
(see examples in \cite{Alcalde-Dal'Bo} and \cite{MMV}).
This question has been addressed in \cite{Alcalde-Dal'Bo}, \cite{ADMV} and \cite{Matsumoto}, obtaining several results under certain restrictions. More concretely:

\begin{enumerate}
 \item In \cite{Alcalde-Dal'Bo}, the question is answered positively for minimal homogeneous $G$-Lie foliations arising from 
 the quotient of the product of $PSL(2,\R)$ and a Lie group $G$ by the action of a cocompact discrete subgroup $\Gamma$.  This manifold is naturally foliated by the orbits of the right $PSL(2,\R)$-action and identifies with the unitary tangent bundle of the foliation $\F$ induced on the quotient 
$$M = \Gamma \bs (PSL(2,\R)/PSO(2,\R) \times G) \simeq \Gamma \bs (\H \times G)$$
by the trivial fibration $D : \H \times G \to G$.

 \item In \cite{ADMV}, the minimality of the horocycle flow is proved for minimal foliations that have a loop which is nonhomotopic to zero in its leaf and which has trivial holonomy.
 
 \item In \cite{Matsumoto}, S. Matsumoto proves that minimality of $h^+_s$ is equivalent to minimality of the $B^+$-action under a more general assumption that includes, among others, all minimal foliations of codimension one.
\end{enumerate}

\noindent
In many cases, though, the question remains unanswered for a large family of foliations by hyperbolic surfaces. 
Focusing on the Riemannian case, directly related with 1 and 3, the minimality of the horocycle flow has been established by the first two authors in the homogeneous case, and later proved by Matsumoto with the additional assumption that there is a leaf which is not simply connected. Let us recall that a \emph{Riemannian foliation} is characterised by the  existence of a Riemannian metric such that 
any geodesic that is orthogonal to the foliation at some point remains orthogonal at every point.
In \cite{Matsumoto}, Matsumoto has formulated the following conjecture:

\begin{conjecture*}[{\cite[Conjecture 1.5]{Matsumoto}}]
For any minimal Riemannian foliation $\mathcal{F}$ by hyperbolic surfaces, the horocycle flow $h_s^+$ is minimal. 
\end{conjecture*}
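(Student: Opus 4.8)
The plan is to establish the stronger property of strict ergodicity and read off minimality from it. The joint action of $g_t$ and $h_s^+$ makes $\hat M = T^1\F$ a compact metric space carrying a measure-preserving action of the affine group $B^+$, which is precisely the setting of Coudène's theorem. That theorem reduces the unique ergodicity of $h_s^+$ to producing a single $B^+$-invariant probability measure $m$ on $\hat M$ for which the geodesic flow $g_t$ is ergodic. Once $h_s^+$ is uniquely ergodic and $m$ has full support, strict ergodicity holds, and minimality of $h_s^+$ follows because any $h_s^+$-orbit closure is a compact invariant set supporting an invariant measure, necessarily $m$, and hence contains $\mathrm{supp}(m)=\hat M$.

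First I would construct $m$. Each leaf $L$ of $\F$ is a hyperbolic surface, so $T^1 L$ is a quotient of $PSL(2,\R)$ and carries a Liouville measure invariant under both the leafwise geodesic and horocycle flows. Since $\F$ is Riemannian, a bundle-like metric makes the holonomy pseudogroup act by isometries of the transverse metric, so the transverse Riemannian volume is holonomy-invariant. Integrating the leafwise Liouville measures against this transverse measure yields a measure $m$ on $\hat M$ invariant under $g_t$ and $h_s^+$, hence under all of $B^+$; minimality of $\F$ forces the transverse measure, and therefore $m$, to have full support.

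The decisive point is the ergodicity of $g_t$ for $m$, and this is where I expect the main difficulty to lie. It cannot be obtained one leaf at a time: in a minimal foliation the leaves are dense and non-compact, and may even be simply connected, in which case $T^1 L\cong PSL(2,\R)$ and the leafwise geodesic flow is totally dissipative rather than ergodic. The ergodicity must therefore come from the transverse direction. I would extract it from the rigidity of the Riemannian transverse structure by means of Molino's theory: passing to the transverse orthonormal frame bundle produces a transversally parallelizable foliation whose leaf-closure dynamics is governed by a structural Lie group $G$, and it realizes $g_t$ as a homogeneous flow on a space modeled on $\Gamma\bs(PSL(2,\R)\times G)$, with $g_t$ given by the right action of the diagonal one-parameter subgroup $A\subset PSL(2,\R)$ and $m$ by the induced Haar measure. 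Because $A$ is non-compact and minimality of $\F$ rules out non-constant invariant functions, Moore's Ergodicity Theorem applies to $A$ and yields ergodicity — indeed mixing — of $g_t$ for $m$. Carrying out this reduction rigorously, so that an arbitrary minimal Riemannian foliation by hyperbolic surfaces, and not only the homogeneous examples of case (1) above, falls within the scope of Moore's theorem, is the heart of the argument.

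With $g_t$ ergodic for $m$, Coudène's theorem gives that $m$ is the unique $h_s^+$-invariant probability measure on $\hat M$. Combined with the full support of $m$ established above, this is exactly the strict ergodicity of the horocycle flow, whence its minimality and the conjecture.
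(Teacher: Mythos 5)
Your overall strategy --- deduce minimality from strict ergodicity, obtained from Coudène's theorem applied to the joint $B^+$-action on $\hat M$ with the measure built by integrating the leafwise Liouville measures against the holonomy-invariant transverse volume --- is exactly the paper's, and your construction of $m$ and the final deduction of minimality from unique ergodicity plus full support are as in the paper. But there is a genuine gap: you have misstated what Coudène's theorem requires. Its hypotheses are not ``$m$ is $B^+$-invariant and $g_t$ is ergodic''; they are that (a) $m$ is invariant under both flows, has full support, and is \emph{absolutely continuous with respect to the local weakly unstable manifolds} $W^{wu}_\varepsilon$ of $g_t$ --- a semi-local product-structure condition on the sets $W^{wu}_\varepsilon$ and on the disintegration of $m$ along them (Definition~\ref{definition:absolute_continuity}) --- and (b) $h_s^+$ has a dense orbit. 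Ergodicity of $g_t$ is not among the hypotheses and cannot substitute for (a): that absolute continuity condition is the most technical assumption, and verifying it is the entire content of Section~\ref{section:Riemannian}. One must show that every backward geodesic in a leaf is shadowed, at a uniformly small distance for all past time, by exactly one geodesic in each nearby leaf; the paper does this by constructing \emph{normal tubes} $\tau : \D \times T_0 \to M$ via the homotopy groupoid and then invoking the Ahlfors--Bers theorem to produce quasiconformal maps $f_y : \D \to \D$ conjugating the continuously varying leafwise hyperbolic metrics to a fixed one (Proposition~\ref{prop:Riemannian}). Nothing in your proposal addresses this, and without it Coudène's theorem simply does not apply.

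Two further points. Hypothesis (b), the transitivity of $h_s^+$, is likewise not supplied by ergodicity of $g_t$; the paper obtains it from Moore's Ergodicity Theorem applied to the $PSL(2,\R)$-action preserving $m$ (whose ergodicity follows from that of the unique transverse invariant measure via the correspondence in \cite{Connell-Martinez}), which gives ergodicity of $h_s^\pm$ with respect to the full-support measure $m$ and hence a dense orbit. Moreover, your proposed detour through Molino theory to realize $g_t$ as a homogeneous flow on $\Gamma\backslash(PSL(2,\R)\times G)$ is both unnecessary --- Moore's theorem applies directly to the $PSL(2,\R)$-action on $\hat M$ --- and unjustified: a general minimal Riemannian foliation by hyperbolic surfaces is not a homogeneous $G$-Lie foliation, and Molino's structure theory does not present $\hat M$ as such a double coset space.
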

\noindent Now this conjecture follows as a consequence of the main result in this paper.

\bigskip 

Like the minimality, the strict ergodicity of the horocycle flow $h_s^+$ is a question that arises naturally in the foliated context, but  still less is known.

As Matsumoto observed in \cite{Matsumotoequidis}, Ratner's classification theorem \cite[Theorem 1]{Ratner} can be used to prove the strict ergodicity 
of right $U^+$-action on the quotient $\Gamma \bs (PSL(2,\R) \times G)$ with respect to the 
$PSL(2,\R)$-invariant measure induced by the left Haar measure on $PSL(2,\R) \times G$. Similarly to this example, Riemannian foliations are always equipped with a transverse volume which is invariant by holonomy. 

In fact, few results are known outside of this example, and essentially they all apply to foliated manifolds constructed by suspension and having a transverse projective structure.
If $\Gamma$ is a cocompact discrete subgroup of $SL(2,\R)$ --or more generally a lattice-- and $\rho : \Gamma \to GL(V)$ is a linear representation as group of linear automorphisms of $V = \R^m$ or $\C^m$, we call \emph{suspension of $\rho$} the foliated vector bundle 
$\Gamma \bs (SL(2,\R)\times V)$ obtained from the diagonal action 
$$
\gamma.(u,w) = (\gamma \cdot u,\rho(\gamma)w),
$$
where $\gamma$ acts on $SL(2,\R)$ by left translation. This also yields a foliated fibre bundle $\Gamma \bs (SL(2,\R)\times \mathbb{P}(V))$ with projective fibre $\mathbb{P}(V)$ defined by the induced projective action. As before, these bundles can be interpreted as unitary tangent bundles of foliations on 
$$\Gamma \bs (SL(2,\R)/SO(2,\R) \times V) \simeq \Gamma \bs (\H \times V) \quad \mbox{and} \quad \Gamma \bs (\H \times \mathbb{P}(V))$$ 
respectively, where $\Gamma$ acts now on $\H$ by deck transformations. From the dynamical study of an irreducible $SL(2,\R)$-action on a real or complex vector bundle preserving an ergodic probability measure on the base, C. Bonatti, A. Eskin and A. Wilkinson 
give a sufficient condition for the unique ergodicity of the horocycle flow on the corresponding projective bundle (see \cite[Theorem 2.3]{Bonatti-Eskin-Wilkinson}).
This extends a former result by C. Bonatti and X. G\'omez-Mont about the unique ergodicity of the horocycle flows associated to the suspension of linear representations $\rho : \Gamma \to GL(3,\C)$ such that $\rho(\Gamma)$ does not admit any invariant measure on $\C P^2$, also valid for any generic representation $\rho : \Gamma \to GL(2,\C)$ (see \cite[Th\'eor\`eme 3]{Bonatti-Gomez-Mont}). 

\bigskip

In this work we are interested in a totally different setting: that of a foliation $\F$ by hyperbolic surfaces on a closed manifold $M$ having a {\em transverse invariant measure}. This is a locally finite measure on a complete transversal which is invariant under holonomy. 
Each transverse invariant measure yields, when combined with the Haar measure on $PSL(2,\R)$, a probability measure on $\hat M=T^1\F$ which is invariant for the foliated horocycle flow. Therefore, this flow might have many ergodic measures with full support: 

\begin{remark} For the foliated horocycle flow, minimality does not imply unique ergodicity.
An example in which this flow is minimal but not uniquely ergodic can be constructed from a volume-preserving diffeomorphism $f : T^2 \to T^2$ which is minimal but not uniquely ergodic, like Furstenberg's example \cite{Furstenberg1}. It suffices to take the suspension of the representation of the fundamental group of a closed hyperbolic surface into the group of diffeomorphisms of $T^2$ sending one generator to $f$ and all the other generator to the identity.
\end{remark}

Now, the question that arises naturally is the following:  
\begin{question}
Let $(M,\F)$ be a minimal foliated manifold by hyperbolic surfaces which has a unique transverse invariant measure. Is the foliated horocycle flow uniquely ergodic?
\end{question}
Like in Furstenberg's proof for skew-products, which applies directly to foliations of codimension one, it seems that ergodicity might imply strict ergodicity under this condition. On the other hand, for some classes of foliations it is known that there exists a unique transverse invariant measure. One of them is just that of {\em Riemannian foliations}, as a particular case of {\em equicontinuous laminations} (see \cite{Matsumotoequi}). In this context, the main result of this paper is the following:

\begin{theorem} \label{thm:uniqueergodicity}
If $\mathcal{F}$ is a minimal Riemannian foliation of a closed manifold $M$ by hyperbolic surfaces, then the horocycle flow $h_s^+$ on the unit tangent bundle $\hat M = T^1\mathcal{F}$ is  strictly ergodic --that is, minimal and uniquely ergodic. 
\end{theorem}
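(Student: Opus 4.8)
The plan is to verify the hypotheses of Coudène's Theorem for the joint affine action on $\hat M = T^1\F$ and then read off strict ergodicity. Since $M$ is closed, $\hat M$ is a compact metric space, and the foliated flows $g_t$ and $h_s^+$ are continuous and satisfy the affine relation \eqref{geodesichorocyclic}, so together they define a continuous, measure-preserving $B^+$-action: exactly the setting of Coudène's Theorem. What remains is to produce a single Borel probability measure $m$ on $\hat M$ that is invariant under both flows and \emph{ergodic} for the geodesic flow $g_t$; Coudène's Theorem will then give that $m$ is the unique $h_s^+$-invariant probability measure.

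First I would construct $m$. Because $\F$ is Riemannian it is equicontinuous, so by \cite{Matsumotoequi} it carries a transverse invariant measure, and this measure is unique up to scale. Coupling it with the leafwise Liouville measure coming from the Haar measure of $PSL(2,\R)$ yields a probability measure $m$ on $\hat M$ invariant under the whole $B^+$-action, and minimality of $\F$ forces $m$ to have full support. Uniqueness of the transverse invariant measure also makes the holonomy pseudogroup ergodic, a fact I would use to identify $m$ with any other $B^+$-invariant, $g_t$-ergodic measure.

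The crux, and the main obstacle, is the ergodicity of $g_t$ with respect to $m$. Here one must be careful: Moore's Ergodicity Theorem cannot be applied to the abstract leaves, which may be copies of $\H$ or infinite-volume surfaces whose leafwise geodesic flow is far from ergodic. The point is rather that the leaves sit densely in $\hat M$, so the measure $m$ thoroughly couples the leafwise and transverse directions and ergodicity becomes a statement about $(\hat M,m)$ rather than about any single leaf. To make Moore's theorem available I would pass through the structure theory of Riemannian foliations: lifting to the transverse orthonormal frame bundle turns $\F$ into a transversally parallelizable foliation with dense leaves, and the Molino--Fedida theory presents the completed transverse structure as a homogeneous model of the type $\Gamma \bs (PSL(2,\R)\times G)$ appearing in case~1 of the introduction, on which the geodesic flow is the right action of the diagonal, hence non-compact, subgroup of $PSL(2,\R)$. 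Moore's Ergodicity Theorem then gives ergodicity of this subgroup action for the Haar-induced measure, and descending back to $\hat M$ yields that $m$ is $g_t$-ergodic. The delicate part is checking that the frame-bundle lift and the Molino fibration are compatible with both flows and with $m$, so that the resulting homogeneous model genuinely satisfies the hypotheses of Moore's theorem.

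Finally, with $m$ both $B^+$-invariant and $g_t$-ergodic, Coudène's Theorem yields that $m$ is the unique $h_s^+$-invariant probability measure, that is, $h_s^+$ is uniquely ergodic. Since $m$ has full support, any orbit closure of $h_s^+$ is a compact invariant set carrying an invariant probability measure, which by unique ergodicity must be $m$; hence every orbit closure contains $\operatorname{supp}(m)=\hat M$ and $h_s^+$ is minimal. Minimality together with unique ergodicity is precisely strict ergodicity, which proves the theorem and, as the special case of minimality, establishes Matsumoto's conjecture.
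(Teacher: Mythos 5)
There is a genuine gap, and it comes from a mis-statement of the key tool. Coudène's theorem, as stated in Section~\ref{subsection:Coudene}, does \emph{not} say that a measure invariant under both flows and ergodic for $g_t$ is the unique $h_s^+$-invariant measure. Its hypotheses are: invariance under both flows, full support, topological transitivity of $h_s^+$, and --- crucially --- \emph{absolute continuity of $\mu$ with respect to the local weakly unstable manifolds $W^{wu}_\varepsilon$} in the sense of Definition~\ref{definition:absolute_continuity} (a local product structure for $W^{wu}_\varepsilon$ transverse to the $h_s^+$-orbits, plus a disintegration of $\mu$ whose quotient measure is arc length along those orbits). Your proposal never addresses this hypothesis, and it is precisely the hard part: in the foliated setting $W^{wu}_\varepsilon(\hat u)$ is not contained in the leaf through $\hat u$ but spreads across nearby leaves, and one must show it fibres over a transversal with leafwise slices that are genuine weakly unstable sets. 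The paper's entire Section~\ref{section:Riemannian} is devoted to this: it constructs \emph{normal tubes} $\tau:\D\times T_0\to M$ from the homotopy groupoid of the Riemannian foliation, uses the Ahlfors--Bers theorem to produce quasiconformal maps $f_y$ identifying the hyperbolic metrics $g_y$ on nearby plaques with a fixed one, and deduces (Proposition~\ref{prop:Riemannian}) that every geodesic is shadowed, for all past time, by exactly one geodesic in each nearby leaf, so that $W^{wu}_\varepsilon(\hat u)$ is sandwiched between $\hat\tau(\hat F(W^{wu}_\varepsilon(u)\times T_0))$ and $W^{wu}_{2\varepsilon}(\hat u)$. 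Without some substitute for this, the appeal to Coudène's theorem does not go through; indeed a $B^+$-invariant, $g_t$-ergodic measure with full support is not by itself enough to force unique ergodicity of $h_s^+$.

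Two smaller points. First, your detour through the transverse frame bundle and Molino--Fedida theory to get ergodicity of $g_t$ is not what the paper does and is not needed: the correspondence of \cite{Connell-Martinez} sends the (unique, hence ergodic) transverse invariant measure to a $PSL(2,\R)$-invariant ergodic measure $\mu$ on $\hat M$ itself, and Moore's theorem applied directly to this $PSL(2,\R)$-action on $(\hat M,\mu)$ gives ergodicity of $g_t$ and of $h_s^{\pm}$; the latter is what supplies the transitivity hypothesis of Coudène's theorem (via full support of $\mu$, which follows from minimality of $\F$). Your claim that Moore's theorem ``cannot be applied'' overlooks that it is applied to the global action, not leafwise. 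Second, your closing argument that unique ergodicity plus full support of the invariant measure implies minimality is correct and matches the intended reading of ``strictly ergodic''. But the core of the theorem --- the verification of absolute continuity with respect to $W^{wu}$ via the geometry of Riemannian foliations --- is absent from your proposal.
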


In order to prove that it is strictly ergodic, we make use of a theorem due to Y. Coudène which is precisely tailored for 
two continuous flows $g_t $ and $h_s^+$ satisfying the condition \eqref{geodesichorocyclic} above (see \cite{Coudene}). 

Coudène's theorem requires the existence of a measure $\mu$ on $\hat M$ that is invariant under both $g_t $ and $h_s^+$. In our case, this measure is the volume measure in $\hat M$ which is locally obtained by integrating the Liouville mesures on the unit tangent bundles to the leaves of $\F$ with respect to the transverse invariant volume $\nu$.

There is a bijection $\nu\mapsto \mu$ between transverse invariant measures for $\F$ and measures on the unitary tangent bundle $\hat{M}$ which are invariant under the $PSL(2,\R)$-action, that takes ergodic measures to ergodic measures. This is a general fact for any action of a  unimodular Lie group such as $PSL(2,\R)$, a proof of which can be found  in \cite[Theorem 5.2]{Connell-Martinez}.

Another relevant assumption in Coudène's theorem is that the horocycle flow $h_s^+$ is transitive. In our setting, we do not have a straightforward proof of this fact. Nevertheless, this follows from Moore's Ergodicity Theorem (which can be found in \cite[Chapter 2]{Zimmer}):
applying it to the $PSL(2,\R)$-action  on $\hat M$ preserving the measure $\mu$ described above, we know that the geodesic flow $g_t$ and both horocycle flows $h^\pm_t$ are ergodic with respect to $\mu$.
Since the foliation $\F$ is minimal, $\mu$ has total support.
All three flows have an orbit which is dense in the support of $\mu$ and hence they are topologically transitive.

The most technical hypothesis in Coudène's theorem is a semi-local condition that involves the long-term control of the geodesic orbits, as explained in Definition~\ref{definition:absolute_continuity}. The main technical tool that we introduce in this paper is the notion of {\em normal tube}, an object which arises from the geometry of the Riemannian foliation, see Definition~\ref{definition:tube}. We use it to prove that any geodesic in a leaf is followed closely, during all its past, by one and only one geodesic in any nearby leaf. This is the essential content of our main technical result, which is Proposition~\ref{prop:Riemannian}.
\medskip 

Section~\ref{section:Preliminaries_and_notation} introduces preliminary material, noticeably the precise statement of Coudène's theorem. Section~\ref{section:Homogeneous_G-Lie_foliations} contains both an example and a non-example. The example consists of Theorem~\ref{thm:uniqueergodicity} in the homogeneous case, which introduces the idea of the proof while avoiding its main difficulty. The non-example was introduced to shed further light on the semi-local condition used by Coudène, showing that it is not exclusive to the Riemannian setting. It also highlights the importance of the topological transitivity of the horocycle flow. Finally, Section~\ref{section:Riemannian} discusses Riemannian foliations and the proof of Theorem~\ref{thm:uniqueergodicity}.
\medskip

The authors would like to thank the referee for a suggestion that significantly improved the paper as well as for a very careful reading of the manuscript. We are also grateful to {\em Institut Henri Poincaré} for its hospitality and financial support during the initial preparation of this work as part of the {\em Research in Paris} program. The third author is grateful to the {\em Universidade de Santiago de Compostela} for its hospitality. This work was partially supported by Spanish MINECO/AEI Excellence Grant MTM2016-77642-C2-2-P and European Regional Development Fund, CSIC (UdelaR, Uruguay) and project PAPIIT IN106817 (UNAM, Mexico).

\setcounter{equation}{0}

\section{Preliminaries and notation}
\label{section:Preliminaries_and_notation}

\subsection{Geodesic and horocycle flows on hyperbolic surfaces}

Let $S$ be a hyperbolic surface, obtained as a quotient $S=\Gamma\backslash \H$ of the hyperbolic plane $\H$ by the action of a torsion-free discrete subgroup $\Gamma$ of the group of orientation-preserving isometries $Isom_+(\H)$. The group $Isom_+(\H)$, which is isomorphic to $PSL(2,\R)$, acts freely and transitively on the unit tangent bundle $T^1\H$ of the hyperbolic plane. Therefore, we will identify $Isom_+(\H)\simeq PSL(2,\R)\simeq T^1\H$. Under these identifications, the unit tangent bundle to $S$ is $T^1S\simeq \Gamma\backslash PSL(2,\R)$. This is the phase space of the geodesic and horocycle flows on $S$, which we will denote by $g_t$ and $h^\pm_t$. To simplify the notation $h^+_t$ will simply be called $h_t$.

When  $T^1S$ is seen as $\Gamma\backslash PSL(2,\R)$, the geodesic flow  $g_t$ is the right action of the diagonal group 
$$D=\left\{ \left(
\begin{array}{cc}
 e^{t/2}&0\\
 0&e^{-t/2}
\end{array}
\right):\ t\in\R
\right\},$$
and the horocycle flows  $h_s^\pm$ are the right actions of the unipotent subgroups 
$$U^+=\left\{ \left(
\begin{array}{cc}
 1&s\\
 0&1
\end{array}
\right):\ s \in\R
\right\} \mbox{ and }
U^-=\left\{ \left(
\begin{array}{cc}
 1&0\\
 s&1
\end{array}
\right):\ s \in\R
\right\},$$
 satisfying $g_t \scirc h_s^+ = h_{se^{- t}}^+ \scirc g_t$ and $g_t \scirc h_s^- = h_{se^t}^- \scirc g_t$. The joint action of $g_t$ and $h^-_s$ is of course the action of the Borel group 
$$B=\left\{ \left(
\begin{array}{cc}
 a&0\\
 b&a^{-1}
\end{array}
\right):\ a,b\in\R, \ a > 0
\right\}.$$

\begin{remark}
 \label{remark:horocycles_are_stable-manifolds} \begin{list}{\labelitemi}{\leftmargin=0pt} 
 \item[~(i)]  The (un)stable horocycles are the strongly (un)stable manifolds for the geodesic flow in $T^1S$, that is,
 $$h^\pm_\R(u)=\{v\in T^1S:\ d(g_t(u), g_t(v))\to 0 \mbox{ as } t\to \pm\infty\},$$
 where $d$ is the distance in $T^1S$.  
 
 \item[~(ii)] If $u$ is a point in $T^1S$, its {\em local weakly unstable manifold} of size $\varepsilon$ is the set
$$W^{wu}_\varepsilon (u)=\{v\in T^1S:\ d(g_{-t}(u), g_{-t}(v))<\varepsilon \; , \; \forall t\geq 0\}.$$
\end{list} 
\end{remark}

We will assume that the injectivity radius of $S$ is bounded below by a positive constant. More precisely, that there exists an $\varepsilon_0>0$ such that the restriction of the 
covering map $T^1\H\to T^1S$ to any ball of radius $\varepsilon_0$ is injective --and therefore a local isometry of class $C^\infty$.

\begin{remark}\label{remark:weakly_unstable_manifold}
  For any positive $\varepsilon \leq \varepsilon_0$, there is an open neighbourhood $B_\varepsilon$ of the identity in the group $B$ such that 
 $$W^{wu}_\varepsilon(u)=u\cdot B_\varepsilon.$$
\end{remark}

\subsection{Foliations by hyperbolic surfaces}

A smooth closed manifold $M$ admits a \emph{smooth foliation } $\F$ of dimension $n$ and codimension $m$ if $M$ is covered by open sets $U_i$ which are homeomorphic to the product of the unit open disks $D^n$ in $\R^n$ and $D^m$ in $\R^m$, in a way that verifies the conditions that follow. If we denote by $\varphi_i : U_i \to D^n \times  D^m$, the corresponding foliated chart, each \emph{distinguished open set} $U_i$ splits into \emph{plaques}  $P_i = \varphi_i^{-1}(D^n  \times  \{y\})$. Each point $y \in D^m$ can be identified with the point $\varphi_i^{-1}(0,y)$ in the \emph{local transversal} $T_i = \varphi_i^{-1}(\{0\} \times  D^m)$. The plaques glue together to form maximal connected surfaces called \emph{leaves}. The disjoint union $T = \sqcup \, T_i$ is a \emph{complete transversal} that meets all the leaves. In addition, the changes of charts
$\varphi_{i} \scirc \varphi_j^{-1} : \varphi_j(U_i \cap U_j) \to \varphi_i(U_i \cap U_j)$
are smooth diffeomorphisms given by
\begin{equation}
\label{ec:cambio_coor}
\varphi_{i} \scirc \varphi_j^{-1}(x,y)
= (\varphi_{ij}(x,y),\gamma_{ij}(y)).
\end{equation}
Then $\gamma_{ij}$ is an smooth diffeomorphism between open subsets of $T_j$ and  $T_i$. 
As is usual, we shall assume that any foliated atlas $\{ (U_i,\varphi_i) \} _{i\in I}$ satisfies the following conditions:

\begin{list}{\labelitemi}{\leftmargin=20pt}

\item[(i)\;\;]  the cover $\mathcal{U} = \{U_i\}_{i\in I}$ is locally finite, hence finite if $M$ is compact,

\item[(ii)\;]  each open set $U_i$ is a relatively compact subset of some foliated chart,

\item[(iii)] if $U_i \cap U_j\neq \emptyset$, there is a foliated chart containing $\overline{U_i \cap U_j}$ and then each plaque of $U_i$ intersects at most one plaque of $U_j$.
\end{list}
Sometimes it is more convenient to define $\F$ by a \emph{foliated cocycle} $(\{(U_i,\pi_i)\}, \{g_{ij}\})$ where $\{U_i\}$ is an open covering, 
$\pi_i : U_i \to T_i $ are smooth surjective submersions, and $g_{ij}$ are locally constant maps sending each point $p \in U_i \cap U_j$
to a local smooth diffeomorphism $g_{ij}(p)$ from $T_j$ to $T_i$ such that
\begin{equation}
\label{ec:cociclo}
\pi_i(p) = g_{ij}(p)(\pi_j(p)).
\end{equation}
The pseudogroup of local diffeomorphisms of $T$ generated by $\gamma_{ij}$ (or equivalently, by  $g_{ij}(p)$) is called the \emph{holonomy pseudogroup $\mathcal{P}$.}
\medskip 

Let $M$ be a compact manifold endowed with a foliation $\F$. If $g$ is a Riemannian metric on $M$, its restriction to the leaves endows each leaf  $L$ with a Riemannian metric $g_L$. \ If $T\F$ is the subbundle of $TM$ consisting of vectors tangent to $\F$, then we have a Riemannian metric tensor $g_T$ defined on $T\F$. 
Moreover, the normal bundle $N\F = TM/T\F$ can be naturally identified with the subbundle $T^\perp\F$ consisting of vectors orthogonal to $\F$. We denote by $g_N$ the corresponding Riemannian metric tensor on $N\F$.
The Riemannian metric $g$ splits then  into the sum
\begin{equation} \label{ec:metricdecomp}
g = g_T \oplus g_N
\end{equation}
on $TM = T\F \oplus N\F$.
\medskip 

Assume that $\F$ is a \emph{foliation by surfaces}, i.e. the leaves of $\F$ have dimension $n=2$. Poincar\'e's uniformisation theorem tells us that in the conformal class of Riemannian metric $g_L$ induced by $g$ on every leaf $L$, there will always be a metric of constant curvature $1$, $0$ or $-1$.
We say that $\F$ is a \emph{foliation by hyperbolic surfaces} if the metric given by the uniformisation has curvature -1 on all leaves. This turns out to be a purely topological condition, independent of the choice of Riemannian metric $g$.
A theorem by A. Candel and A. Verjovsky (see \cite{Candel} and \cite{Verjovsky}) says that every leafwise metric of constant curvature $-1$ varies, in the direction transverse to the foliation, continuously in the smooth topology.
\medskip 

The Riemannian metric $g_T$ along the leaves of
$\F$ allows us to consider unit vectors in $T\F$. 
We therefore have:

\begin{definition}
\label{definition:unit_tangent_bundle}
The \emph{unit tangent bundle} $T^1\F$ 
is the circle bundle $\mathfrak{p} : T^1\F \to M$ whose fibre at $x$ is the set of unit vectors tangent to the leaf of $\F$ through $x$. The manifold $\hat M=T^1\F$ has a foliation $\hat\F$ 
of dimension $3$ and codimension $m$,  
whose leaves are the unit tangent bundles of the leaves of $\F$.
\end{definition}

The Riemannian metric $g_L$ on each leaf $L$ induces the corresponding Sasaki metric on the leaf $T^1L$ of $\hat\F$. Thus, the Riemannian metric $g_T$ along the leaves of $\F$ induces in a natural way a Riemannian metric $\hat g_T$ along the leaves of $\hat \F$. In fact, there is a lifted Riemannian metric $\hat g$ on $\hat M$ such that the projection $\mathfrak{p} : \hat M \to M$ becomes a Riemannian submersion. Moreover, as in \eqref{ec:metricdecomp}, $\hat g$ splits into the sum $\hat g = \hat g_T \oplus \hat g_N$, where $\hat g_N$ is the Riemannian metric tensor which makes $\mathfrak{p}_\ast : N\hat\F \to N\F$ an isometric bundle morphism. If $g_T$ is the uniformised Riemannian metric along the leaves of $\F$, then $\hat g_T$ varies only continuously in the transverse direction to $\hat\F$.
The geodesic and horocycle flows on the individual leaves of $\hat\F$, when considered together, constitute the {\em foliated geodesic and horocycle flows}, which are continuous flows on $\hat M$. As in the case of surfaces, we will call them $g_t$ and $h^\pm_s$.   
When identifying $T^1\H\simeq PSL(2,\R)$,
the foliation  $\hat\F$ becomes the foliation by orbits of a continuous locally-free right $PSL(2,\R)$-action on $\hat M$. The foliated geodesic and horocycle flows 
are given by the action of the one parameter subgroups $D$, $U^+$ and $U^-$.

\subsection{Coudène's theorem} \label{subsection:Coudene}

The key tool in our proof of unique ergodicity for the foliated horocycle flow in Riemannian foliations is a theorem due to Y. Coudène, which is the object of his paper \cite{Coudene}. We will state it first and introduce the necessary definitions next.

\begin{coudene}
Let $X$ be a compact metric space, $g_t$ and $h_s$ two continuous flows on $X$ which
satisfy the relation: $g_t \scirc h_s = h_{se^{-t}} \scirc g_t$. Let $\mu$ be a Borel probability measure invariant
under both flows, which is absolutely continuous with respect to $W^{wu}$, and with full
support. Finally assume that the flow $h_s$ admits a dense orbit. Then $h_s$ is uniquely
ergodic.
\end{coudene}

The most technical assumption is 
the absolute continuity of $\mu$ with respect to $W^{wu}$, which we will now explain. 

\begin{definition}\label{definition:weakly_unstable_manifold_for_coudene's_theorem}
 Given $\varepsilon>0$ and a point $p\in X$, the \emph{local weakly unstable manifold} at $p$ of size $\varepsilon$ for the flow $g_t$ is the set
 $$W^{wu}_\varepsilon(p) = \{q\in X:\ d(g_{-t}(p),g_{-t}(q))<\varepsilon \; , \; \forall t\geq 0\}.$$
\end{definition}

\noindent It is, of course, in general not a manifold.

\begin{definition}[\cite{Coudene}] \label{definition:absolute_continuity}
 A probability measure $\mu$ invariant under the flow $h_s$ is \emph{absolutely continuous with respect to $W^{wu}_\varepsilon$} if the following two conditions hold:
 \begin{enumerate}
 \item {\bf Local structure of} $W^{wu}_\varepsilon$: For every point $p$ and every $\varepsilon>0$ there exists $\delta>0$ such that for all 
$q \in W^{wu}_\varepsilon(p)\cap B(p,\delta)$ and 
for all  $s \in (-1,1)$, the intersection $W^{wu}_\varepsilon(h_s(p))\cap h_{(-2,3)}(q)$ consists of a single point $q'$. 
Moreover, the map  

  \begin{center}
 \begin{tikzcd}[column sep =5ex]
 H : (q,s) \in (W^{wu}_\varepsilon(p)\cap B(p,\delta))\times (-1,1)  \arrow[mapsto]{r} &  q' \in X \end{tikzcd}
 \end{center}
 is a homeomorphism onto a neighbourhood of $p$.
 
  \item {\bf Local structure of the measure} $\mu$: In these coordinates, the measure $\mu$ disintegrates as $d\nu_s(q)\otimes ds$, where $d\nu_s$ is the conditional probability measure on $(W^{wu}_\varepsilon(p)\cap B(p,\delta))\times \{s\}$.

 \end{enumerate}
\end{definition}

 The first condition says that the $W^{wu}_\varepsilon$ are transverse to the flow $h_s$ and the local foliation they define has no holonomy. The second says that, when disintegrated with respect to the partition given by the $W^{wu}_\varepsilon$, 
the (pseudo)image of $\mu$ is the arc length.
 
\setcounter{equation}{0}

\section{Two examples}
\label{section:Homogeneous_G-Lie_foliations}

In this section, we will show how to apply Coudène's theorem to prove the unique ergodicity of homogeneous $G$-Lie foliations described in the introduction, which we will also compare to a totally different example. We will start by explaining their construction. 

\subsection{An example}

 Let $H$ and $G$ be two Lie groups and $\hat D : H \to G$ a surjective homomorphism. 
 Assume that $H$ contains a cocompact discrete subgroup $\Gamma$. The foliation by the fibres of $\hat D$, which are right cosets of 
 the kernel $K$ of $\hat D$,  is invariant by the left action of $\Gamma$, and then
 induces a 
 foliation on the compact manifold $\Gamma \bs H$. In fact, this construction can be modified by taking the quotient of the right action of some compact subgroup $K_0$ of $K$.
Indeed, the compact manifold $M = \Gamma \bs H / K_0$ admits a 
foliation induced by the foliation on $H/K_0$ whose leaves are the fibres of 
$D : H/K_0 \to G$. The leaves are diffeomorphic to $\Gamma \cap K \bs K / K_0$. 
 According to \cite[Appendix E]{Molinobook}, any foliation obtained by this method is called a \emph{homogeneous ($G$-Lie) foliation}.

 If $K = PSL(2,\R)$, a theorem by \'E. Cartan \cite[Corollary C]{Samelson} implies that $H = PSL(2,\R) \times G$ up to isomorphism of Lie groups. Given any cocompact discrete subgroup $\Gamma$ of $H$, the natural projection $\hat D : PSL(2,\R) \times G \to G$  defines a homogeneous 
foliation on $\hat M = \Gamma \bs (PSL(2,\R) \times G)$. Taking  $K_0 = PSO(2,\R)$, we obtain a homogeneous 
foliation on the manifold
$$M = \Gamma \bs (PSL(2,\R)/PSO(2,\R) \times G) \simeq \Gamma \bs (\H \times G).$$

 A particular example is obtained by suspension when 
$\Gamma$ is the fundamental group of a compact hyperbolic surface $S = \Gamma \bs \H$ and $\rho :\Gamma \to G$ is a representation in a compact Lie group $G$.  Indeed, there is a diagonal action of $\Gamma$ on $\H\times G$ given by 
$$\gamma\cdot (x,g)=(\gamma\cdot x,\rho(\gamma) g),$$
where $\Gamma$ acts on $\H$ by deck transformations, and the horizontal foliation on \mbox{$\H \times G$} projects onto a homogeneous 
foliation on $M=\Gamma\bs (\H\times G)$.
 The leaves are diffeomorphic to $Ker \, \rho \bs \H$.

In  this homogeneous situation, all manifolds involved in the construction 
are equipped with Riemannian metrics defined in a natural way.
The foliation $\hat\F$ on the unit tangent bundle $\hat M = \Gamma \bs (PSL(2,\R) \times G) \simeq \Gamma \bs (T^1\H \times G)$ is induced by the horizontal foliation on $H = PSL(2,\R) \times G \simeq T^1\H \times G$. 
The hyperbolic metric on $\H$ induces a Sasaki metric on $T^1\H$. Under the identification of $T^1\H$ with $K=PSL(2,\R)$, the Sasaki metric becomes a metric $g_K$ on $K$ which is left-invariant. Endowing $G$ with a left-invariant metric $g_G$, the bundle $\hat M$
 is endowed with the Riemannian metric $\hat g$ induced by the sum $g_K \oplus g_G$. Clearly $\hat g$ projects onto a metric on $M$ which comes from the sum $g_\H \oplus g_G$ on $\H \times G$. A summary of all the relevant maps is given by the following diagram:

 \begin{equation} \label{ec:maps}
 \begin{tikzcd}[row sep=6ex, column sep =2ex]
  & T^1\H  \times G \arrow[swap]{dl}{\hat \pi} \arrow{dr}{\mathfrak{p}} \arrow[bend left=15]{drrr}{\hat D} &  & & \\ 
 \hat M \arrow[swap]{dr}{\mathfrak{p}} & & \H \times G  \arrow{rr}{D} \arrow{dl}{\pi} & \quad & G \\
  & M & & & 
 \end{tikzcd}
 \end{equation}
where all maps are Riemannian submersions (with the metrics that we have just described).
Since $\hat \pi : T^1\H\times G\to \hat M$ is actually a local isometry, 
 any contractible neighbourhood of a point in $\hat M$ is identical from a topological and metrical point of view to a neighbourhood of any of its preimages in $T^1\H\times G$. 
\medskip 

 Let $\varepsilon_0 > 0$ be small enough 
such that $\hat \pi$ is injective on any ball of radius $\varepsilon_0$ (and hence smaller than the injectivity radius of the surface $\Gamma\backslash \H$), and let $\varepsilon \leq \varepsilon_0/2$.
Denote by $B(e,\varepsilon)$ the ball in $G$ centred at $e$ of radius $\varepsilon$, and recall $B_\varepsilon$ is the open ball in the Borel group $B$ given by Remark~\ref{remark:weakly_unstable_manifold}.

With these notations, for the product metric in $T^1\H\times G$, we have:
 $$W^{wu}_\varepsilon (u,e) \subset \big( \{u\}\times B(e,\varepsilon)\big)\cdot B_\varepsilon =  W^{wu}_\varepsilon(u) \times B(e,\varepsilon) \subset W^{wu}_{2\varepsilon}(u,e)$$
for all $u \in T^1\H$. Since $\hat \pi$ is injective at this small scale, we have: 
 $$
W^{wu}_{\varepsilon}(\hat \pi(u,e)) =\hat \pi(W^{wu}_{\varepsilon}(u,e)) \subset\hat \pi \big(  W^{wu}_\varepsilon(u) \times B(e,\varepsilon) \big) 
\subset W^{wu}_{2\varepsilon}(\hat \pi(u,e)).
 $$
 Notice that all the above discussion refers to the {\em metric} and not the {\em algebraic} structure of $G$. Therefore, since $G$ acts transitively on itself by left translations which are isometries, the identity element $e$ does not play any special role:

\begin{proposition}  \label{prop:homogeneus}
 The local weakly unstable manifolds for the foliated geodesic flow  $g_t$ at any point $\hat u$ of $\hat M$ are given by
 $$
W^{wu}_{\varepsilon}(\hat u) \subset\hat \pi \big( W^{wu}_\varepsilon(u) \times B(k,\varepsilon) \big) 
\subset W^{wu}_{2\varepsilon}(\hat u)
 $$
 when $\hat u =\hat \pi(u,k)$ for some $(u,k) \in T^1\H \times G$. \qed
\end{proposition}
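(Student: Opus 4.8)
The plan is to deduce the stated double inclusion at an arbitrary point $\hat u = \hat\pi(u,k)$ from the one already established at $\hat\pi(u,e)$, using the homogeneity of the $G$-factor. The key structural observation is that on $T^1\H\times G\simeq PSL(2,\R)\times G$ the foliated geodesic flow $g_t$ is the right action of the diagonal subgroup $D\subset PSL(2,\R)$ on the first factor, and therefore acts only on the $T^1\H$ coordinate while leaving the $G$ coordinate fixed for every $t$, in particular for the backward times $t\le 0$ that enter the definition of $W^{wu}_\varepsilon$.

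First I would exploit that $g_G$ is left-invariant, so that the left translation $L_k\colon G\to G$, $g\mapsto kg$, is an isometry of $(G,g_G)$ carrying $B(e,\varepsilon)$ onto $B(k,\varepsilon)$. Consequently the product map $\Phi_k = \mathrm{id}_{T^1\H}\times L_k$ is an isometry of $T^1\H\times G$ for the product metric $g_K\oplus g_G$. Since $\Phi_k$ acts trivially on the first factor while $g_t$ acts trivially on the second, the two commute: $\Phi_k\scirc g_t = g_t\scirc \Phi_k$.

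The heart of the argument is then the general principle that an isometry commuting with $g_t$ carries local weakly unstable manifolds to local weakly unstable manifolds. Indeed, reading off the defining condition $d(g_{-t}(p),g_{-t}(q))<\varepsilon$ for all $t\ge 0$ and using that $\Phi_k$ preserves distances and intertwines the flow, one gets $\Phi_k\big(W^{wu}_\varepsilon(p)\big)=W^{wu}_\varepsilon(\Phi_k(p))$. Taking $p=(u,e)$ yields $\Phi_k\big(W^{wu}_\varepsilon(u,e)\big)=W^{wu}_\varepsilon(u,k)$ and likewise for the $2\varepsilon$ manifold, while $\Phi_k\big(W^{wu}_\varepsilon(u)\times B(e,\varepsilon)\big)=W^{wu}_\varepsilon(u)\times B(k,\varepsilon)$. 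Applying $\Phi_k$ to the chain already proved at $(u,e)$ therefore transports it verbatim to $(u,k)$:
$$W^{wu}_\varepsilon(u,k)\subset W^{wu}_\varepsilon(u)\times B(k,\varepsilon)\subset W^{wu}_{2\varepsilon}(u,k).$$

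Finally I would push these inclusions down through $\hat\pi$ exactly as in the $(u,e)$ case. Since $\varepsilon\le\varepsilon_0/2$, the whole configuration, including $W^{wu}_{2\varepsilon}(u,k)$, sits inside a ball of radius $\varepsilon_0$ on which $\hat\pi$ is an injective local isometry, so $W^{wu}_\varepsilon(\hat\pi(u,k))=\hat\pi\big(W^{wu}_\varepsilon(u,k)\big)$ and the inclusions survive the projection. I expect no genuine obstacle: this is in essence a symmetry reduction, and the only points requiring care are checking that $\Phi_k$ truly commutes with $g_t$ (this is exactly where the structural fact that the geodesic flow fixes the $G$-coordinate is used, and it would fail for a non-left-invariant metric on $G$) and that $2\varepsilon\le\varepsilon_0$ keeps everything within the injectivity scale, so that $\hat\pi$ preserves the relevant distances uniformly in $t$. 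Note that $\Phi_k$ itself does not descend to $\hat M$—it is a left translation that need not commute with the $\Gamma$-action—which is why the transport must be carried out upstairs and only then pushed down by $\hat\pi$.
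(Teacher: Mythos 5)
Your proposal is correct and follows essentially the same route as the paper: the paper establishes the chain of inclusions at $(u,e)$ via the product structure of the metric and then dismisses the general case with the remark that, since left translations of $G$ are isometries, the identity element plays no special role, before pushing down through $\hat\pi$. You have merely made that homogeneity remark explicit by exhibiting the isometry $\Phi_k=\mathrm{id}_{T^1\H}\times L_k$ commuting with $g_t$, together with the (correct) caveat that the transport must be performed upstairs because $\Phi_k$ need not descend to $\hat M$.
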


\noindent

It allows us to deduce the following result from Coud\`ene's theorem: 

\begin{proposition}[Theorem~\ref{thm:uniqueergodicity} for homogeneous $G$-Lie foliations] \label{thm:homogeneous}
If $\F$ is a  minimal homogeneous $G$-Lie foliation on a closed manifold $M$, then the horocycle flow $h_s$ on the unit tangent bundle $\hat M = T^1\F$ is  strictly ergodic. 
\end{proposition}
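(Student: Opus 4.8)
The plan is to verify the four hypotheses of Coudène's theorem for the foliated horocycle flow $h_s$ on $\hat M = \Gamma \bs (PSL(2,\R) \times G)$ and then simply invoke it. The phase space $\hat M$ is compact since $\Gamma$ is cocompact, and $g_t$, $h_s$ are continuous flows satisfying the required relation \eqref{geodesichorocyclic} because they come from the right action of the subgroups $D$ and $U^+$ of $PSL(2,\R)$. For the invariant measure $\mu$, I would take the measure induced on $\hat M = \Gamma \bs (PSL(2,\R) \times G)$ by the product of the Haar measure on $PSL(2,\R)$ and the Haar measure on the compact group $G$; this descends to a finite (hence, after normalisation, probability) measure which is invariant under the right $PSL(2,\R)$-action and in particular under both $g_t$ and $h_s$. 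Since $G$ is compact and $\F$ is minimal, $\mu$ has full support. The existence of a dense orbit for $h_s$ follows exactly as described in the introduction: Moore's Ergodicity Theorem applied to the $PSL(2,\R)$-action preserving $\mu$ yields ergodicity of $h_s$ with respect to the fully-supported measure $\mu$, and ergodicity against a fully-supported measure forces the existence of an orbit dense in the support, i.e. topological transitivity.

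The remaining and most substantial hypothesis is the absolute continuity of $\mu$ with respect to $W^{wu}_\varepsilon$ in the sense of Definition~\ref{definition:absolute_continuity}, and this is where Proposition~\ref{prop:homogeneus} does the work. First I would establish the local product structure (condition 1). Using that $\hat\pi : T^1\H \times G \to \hat M$ is a local isometry at scale $\varepsilon \le \varepsilon_0/2$, and the sandwich description of $W^{wu}_\varepsilon(\hat u)$ from Proposition~\ref{prop:homogeneus}, the local weakly unstable sets are genuine pieces of the weakly unstable manifolds of the individual hyperbolic leaves, each glued continuously over a ball $B(k,\varepsilon)$ in $G$. In the upstairs model $T^1\H \times G$ the splitting is literally a product: the $W^{wu}_\varepsilon$ are transverse to the $U^+$-direction and carry no holonomy because in $\H$ the classical local coordinates $(q,s) \mapsto q'$ coming from Remark~\ref{remark:weakly_unstable_manifold} give a homeomorphism onto a neighbourhood, and the extra $G$-factor is just a transverse parameter. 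Pushing this down by the local isometry $\hat\pi$ gives the required homeomorphism $H$ onto a neighbourhood of $\hat u$.

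For condition 2, the disintegration of the measure, the key point is again that upstairs $\mu$ lifts to (a constant multiple of) the product $\mathrm{Haar}_{PSL(2,\R)} \otimes \mathrm{Haar}_G$. In the classical surface case the Haar/Liouville measure on $PSL(2,\R)$ disintegrates, in the $(W^{wu}_\varepsilon, h_s)$-coordinates, as conditional measure on the weakly unstable leaf times arc length $ds$ along the horocycle; this is the content of Coudène's argument for surfaces. Tensoring with $\mathrm{Haar}_G$ only enriches the conditional (weakly unstable) factor and leaves the $ds$ factor untouched, since the $G$-direction is transverse to the $U^+$-flow. Transporting by the local isometry $\hat\pi$ preserves the disintegration, giving precisely $d\nu_s(q)\otimes ds$ with $\nu_s$ a conditional probability on the weakly unstable slice. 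With all four hypotheses in place, Coudène's theorem yields unique ergodicity of $h_s$, and minimality follows from the full support of the unique invariant measure together with topological transitivity, so $h_s$ is strictly ergodic.

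The hard part is condition 1 of Definition~\ref{definition:absolute_continuity}: one must check, not merely that the sets $W^{wu}_\varepsilon(\hat u)$ look like products upstairs, but that the gluing map $H$ is genuinely a homeomorphism onto a neighbourhood and that the no-holonomy statement survives the sandwich inclusions of Proposition~\ref{prop:homogeneus}, which only pin down $W^{wu}_\varepsilon(\hat u)$ between two product sets rather than identifying it exactly. The homogeneous structure is what tames this: because $G$ acts on itself by isometric left translations, the local model is the same at every point, so it suffices to verify the transversality and single-intersection property once, in the explicit coordinates of $\H$, and transport it everywhere. This is exactly why this homogeneous case, as the authors note, "introduces the idea of the proof while avoiding its main difficulty" — the genuine obstacle, postponed to the Riemannian setting, is the absence of such a global product/homogeneity, which there forces the introduction of the normal tube of Definition~\ref{definition:tube}.
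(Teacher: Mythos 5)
Your proposal is correct and follows essentially the same route as the paper: both verify Coudène's hypotheses by lifting to $PSL(2,\R)\times G$ via the local isometry $\hat\pi$, using Proposition~\ref{prop:homogeneus} for the local product structure (condition 1), the right-invariance of the left Haar measure of $PSL(2,\R)$ for the disintegration (condition 2), and Moore's Ergodicity Theorem for transitivity. The only differences are cosmetic: the paper carries out the single-intersection check explicitly as a matrix identity $h_{-s}b_0=b_1h_{-l}$ in $PSL(2,\R)$ where you defer it to the classical coordinates on $\H$, and note that finiteness of $\mu$ should be attributed to cocompactness of $\Gamma$ rather than compactness of $G$, which need not hold for a general homogeneous $G$-Lie foliation.
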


\begin{proof}
 As explained in the introduction, the horocycle flow $h_s$ is transitive, although we already know that $h_s$ is minimal in this case from \cite{Alcalde-Dal'Bo}. Therefore, all we have to prove that the volume measure $\mu$ that comes from the metric $g$ is absolutely continuous with respect to $W^{wu}$.

This condition is 
semi-local, but the orbits of $h_s$ being contractible, we can always reduce to contractible neighbourhoods which trivialise $\hat \pi$. So rather than proving it in $\hat M$ we will prove it in $PSL(2,\R)\times G$.  The Borel probability measure $\mu$ on $\hat M$ is induced (up to normalisation) by the  product $\mu_{PSL(2,\R)}\otimes \mu_G$ of the left Haar measures of $PSL(2,\R)$ and $G$.

To see that the first condition in Definition~\ref{definition:absolute_continuity} is satisfied, we will fix a point  $p \in T^1\H\times G$ and $\varepsilon > 0$. As has been remarked before, without loss of generality, we can take 
a point of the form $p=(u,e)$, with $u\in T^1\H=PSL(2,\R)$ and $e$ being the identity element in $G$. We will also consider a point $q \in W^{wu}_\varepsilon (p)\cap B(p,\delta)$, a time $s\in (-1,1)$, and we will see that the intersection 
$$W^{wu}_\varepsilon (h_s(p))\cap h_{(-2,3)}(q)$$
has exactly one point.

The point $q=(ub_0,k_0)$ for some $b_0\in B_\varepsilon$ and $k_0 \in B(e,\varepsilon)$. The horocycle flow is given by the action on the $PSL(2,\R)$ factor defined by right translation of the one parameter group $U^+=\{h_s\}_{s\in\R}$. Slightly abusing notation, we will write $h_s(p)=(uh_s,e)$, and similarly for horocycle orbits through other points. The intersection above has exactly one point if and only if there exist unique $l\in (-2,3)$, $b_1\in B_\varepsilon$ and $k_1\in B(e,\varepsilon)$ such that 
$$(uh_sb_1,k_1)=(ub_0h_l,k_0).$$
Clearly the only possibility for $k_1$ is $k_1=k_0$.
The equality of the first component gives 
$$
h_{-s}b_0 = 
\left( 
\begin{array}{rr}
1 & -s \\
0 & 1
\end{array} \right) 
\left( 
\begin{array}{cc}
\lambda_0 & 0 \\
t_0 & \lambda_0^{-1} 
\end{array} \right) 
 = 
 \left( 
 \begin{array}{cc} 
 \lambda_1 & 0 \\
 t_1 & \lambda_1^{-1} 
 \end{array} \right) 
\left( 
\begin{array}{rr}
1 & -l \\ 
0 & 1 
\end{array} \right)  = b_1h_{-l}
$$
which is an equation in $PSL(2,\R)$. The left hand side is known. Now, there is a unique choice of $b_1\in B$ and $h_{-l}\in U^+$ such that this equation is satisfied,  namely 
$$
\lambda_1 =  \lambda_0 - st_0 \, , \quad t_1 = t_0 \, , \quad \mbox{and} \quad  l =  \frac{s}{\lambda_0\lambda_1}.
$$
The time $s$ is arbitrary in $(-1,1)$ so it cannot be chosen. But taking a small enough $\delta$, $b_0$ is very close to the identity  so 
$t_0 \approx 0$ and $\lambda_0 \approx 1$. 
This is enough to guarantee that $b_1\in B_\varepsilon$ and $l\in (-2,3)$.

This defines the map 
$$
\begin{array}{rcl}
(W_\varepsilon^{wu}(p)\cap B(p,\delta))\times (-1,1) &\to & PSL(2,\R)\times G\\
(q,s) &\mapsto & q_s,
\end{array}
$$
where $q_s$ is the only intersection point of $h_{(-2,3)}(q)$ and $W^{wu}_\varepsilon(h_s(p))$. 
It is clearly injective and continuous. A close look at Proposition~\ref{prop:homogeneus} shows that it is also surjective onto an open neighbourhood of $p$, and since all spaces are locally compact it is a homeomorphism onto its image.

We now have to verify that the second condition in Definition~\ref{definition:absolute_continuity} holds. That is, that the disintegration of the measure  
$\mu_{PSL(2,\R)}\otimes \mu_G$ with respect to the partition given by the above coordinates is of the form
$$
 \mu_{PSL(2,\R)}\otimes \mu_G =\nu_s(q)\otimes ds,$$
where $\nu_s$ is the conditional (probability) measure along 
$(W^{wu}_\varepsilon(p)\cap B(p,\delta))\times \{s\}$ and $ds$ is the Lebesgue measure in $(-1,1)$. That is, we have to see that 
$$\mu_{PSL(2,\R)}\otimes \mu_G \big((W^{wu}_\varepsilon(p)\cap B(p,\delta))\times (s',s'')\big) =s''-s',$$ for all $-1<s'<s''<1$.

The $s$ parameter does not affect the measure on the $G$-factor, so it is enough to verify the condition on $PSL(2,R)$ --that is, as if we were working in the hyperbolic plane.
We are simply considering the disintegration of the Haar measure $\mu_{PSL(2,\R)}$ in (an open subset of) $PSL(2,\R)$ with respect to the foliation given by weakly unstable manifolds of points in a fixed horocycle orbit. 
 Thus, we are disintegrating with respect to 
$$
W^{wu} (h_{s}(u)) = u
\left(\begin{array}{cc}                                                                                                                                                                                                                                                                                                                                                                                                                                                                                                                                                                                                1&s \\                                                                                                                                                                                                                                                                                                                                                                                                                                                                                                                                                                                            0&1
\end{array}
\right) \cdot B 
$$
for $s \in (s',s'')$.
This set has $\mu_{PSL(2,\R)}$-measure which only depends on $s''-s'$ simply because the left Haar measure $\mu_{PSL(2,\R)}$ is also invariant under right translations in $PSL(2,\R)$. 
\end{proof}

\begin{remark}
 \label{remark:proof_of_main_theorem_works_in_general_case}
 Notice that the above proof does not use the algebraic structure of $G$. Namely, once we have Proposition~\ref{prop:homogeneus}, the fact that the transverse structure of $\F$ is modelled on the Lie group $G$ is no longer necessary.
\end{remark}

\subsection{A non-example}

Let $Sol^3$ be the 3-dimensional solvable Lie group (diffeomorphic to $\R^3$) with the multiplication law given by 
$$
(x,y,z).(x',y',z') = (x+e^zx', y+e^{-z}y',z+z').
$$
If $A \in SL(2,\Z)$ is a hyperbolic matrix (with $|tr(A)| > 2$), then $A$ is conjugate to a diagonal matrix 
$$
\left(
\begin{array}{ll}
\lambda & 0 \\
0 & \lambda^{-1}
\end{array} \right),
$$
where $\lambda > 1 > \lambda^{-1}$ up to multiplication by $-Id$. Using the eigenvector basis and multiplying the third coordinate by $1/log\lambda$,  the group $\Z^3$ with the multiplication law given by
$$
(m,n,p).(m',n',p') = ((m,n) + A^p(m',n'), p+p')
$$
can be realised as a cocompact discrete subgroup $\Gamma$ of $Sol^3$. The quotient manifold $M=\Gamma \bs Sol^3$ is a torus fibre bundle over $S^1$ with linear monodromy $A$, which is usually called $T^3_A$, see for example \cite{Alcalde-Dal'Bo} and \cite{GhysSergiescu}. It is endowed with a codimension one foliation $\F$ defined by the 
1-form on $M$ which is induced by the left invariant 1-form $\omega = e^z dy$ on $Sol^3$. As is usual, we will use the somewhat abusive notation identifying left invariant 1-forms and vector fields on $Sol^3$ with their projections on $M$.  This foliation is induced by the left invariant vector fields 
$$
X = e^z \vf{x} \quad \mbox{and} \quad Z = - \vf{z},
$$
generated by the flows 
$$
h^+_s(x,y,z) = (x,y,z).(s,0,0) = (x+e^zs,y,z)
$$
and
$$
\quad g_t(x,y,z) = (x,y,z).(0,0,t) = (x,y,z+t)
$$
respectively. To justify this notation, let us observe that its Lie bracket $[X,Z] =Z$ and hence $\F$ is defined by a locally free $B^+$-action. This is actually the right $B^+$-action obtained from the realisation of the affine group $B^+$ (identified to the semidirect product $\R \rtimes \R^\ast_+$) as a Lie subgroup of $Sol^3$ which sends $(a,b)$ to $(a,0,log \, b)$. So the one-parameter subgroups of $B^+$ generated by $X$ and $Z$ are given by 
$$
h_s^+(a+bi) = a+bs+bi \quad \mbox{and} \quad g_t(a+bi) = a + e^tbi,
$$
where $(a,b) \in \R \rtimes \R^\ast_+$ is identified with 
$$
a+bi = \frac{\sqrt{b} \, i + a/ \sqrt{b}}{1/\sqrt{b}} \in \H.
$$

On the other hand, 
$$
g = e^{-2z}dx^2 + e^{2z}dy^2 + dz^2
$$
is a left invariant Riemannian metric on $Sol^3$, and its restriction
$$g_L = e^{-2z}dx^2 + dz^2 = \frac{da^2 + db^2}{b^2}$$
to each leaf $L$ is precisely the Poincar\'e metric $g_\H$ (up to the coordinate change $b = e^z$). 
The third left invariant vector field $Y = e^{-z}\vf{y}$ is orthogonal to the foliation $\F$ and generated by the orthogonal flow
$$
v_s(x,y,z) = (x,y,z).(0,s,0) = (x,y+e^{-z}s,z). 
$$
As before, since the Lie bracket $[Y,Z] = -Y$, we deduce that $g_{-t} \scirc v_s = v_{se^{-t}} \scirc g_{-t}$ so that the flow $v_s$ play the role of the horocycle flow $h_s^-$. 
The biinvariant Haar measure 
$$
dx \wedge dy \wedge dz =  b (\frac{da \wedge db}{b^2}) \wedge (- dy) =  e^z vol_\H \wedge (- dy)
$$
induces a probability measure $\nu$ on $M$. Since $b= e^z$ is a harmonic function on $\H$, this measure $\nu$ is harmonic. In fact, according to a theorem by B. Deroin and V. Kleptsyn \cite{Bertrand}, this is the unique harmonic measure of $\F$. A theorem by 
Y. Bakhtin and the third author \cite{Bakhtin} proves that $\nu$ lifts to a unique measure $\mu$ on the unit tangent bundle $\hat M = T^1 \F$ which is 
invariant under both foliated geodesic $g_t$ and horocycle $h_s^+$ flows. 
\medskip 

As proved in \cite{Alcalde-Dal'Bo}, the horocycle flow $h_s^+$ is not minimal because orbit closures reduce to integral surfaces (diffeomorphic to $T^2$) of the vector fields $X$ and $Y$. In fact, contrary to the previous example, this flow is neither uniquely ergodic, nor ergodic with respect to to a volume. However, conditions 1 and 2 in Definition~\ref{definition:absolute_continuity} hold in this case. 

Indeed, the manifold $\hat M$ is the quotient of the unitary tangent bundle to the lifted foliation, and this bundle is made up of pairs $(p,u)$ where $p= (x,y,z)$ is an element of $Sol^3$ and $u$ is a unitary vector in the plane generated by $X_p$ and $Z_p$. The measure $\mu$ is induced by the product of the Haar measure $\mu_{PSL(2,\R)}$ modified with the density $e^z$ and the transverse volume $-dy$. With the obvious abusive notation, already used above, the local weakly unstable manifolds
$W^{wu}_\varepsilon(p,u)$ are made up of unitary tangent vectors $(p',u')$ such that $d(g_{-t}(p,u),g_{-t}(p',u'))<\varepsilon$ for all $t\geq 0$. If 
$p'=(x',y',z')$, then 
$p'' = v_{e^{z'}(y-y')}(x',z',y') = (x',y,z')$ belongs to the leaf passing through $p = (x,y,z)$. Since 
\begin{eqnarray*}
v_{e^{z'-t}(y-y')} ( g_{-t} (x',y',z') ) & = & v_{e^{z'-t}(y-y')}(x',y',z'-t) \\ 
&  = & (x', y'+e^{-(z'-t)}e^{z'-t}(y-y'),z'-t) \\ 
 & = & (x', y',z'-t) \\ 
 & =  & g_{-t} ( v_{e^{z'}(y-y')}(x',z',y') ) = g_{-t}(p''),  
\end{eqnarray*}
we have
\begin{eqnarray*} 
d \big( g_{-t}(p,u),g_{-t}(p',u') \big)& \leq & 
d  \big( g_{-t}(p,u),g_{-t}(p'',u'') \big) \\ 
& + & d  \big( g_{-t}(v_{e^{z'-t}(y-y')}(g_{-t}(p',u'))),g_{-t}(p',u')\big),
\end{eqnarray*}
where $d \big( g_{-t}(p,u),g_{-t}(p'',u''))$ is just the distance in the leaf passing through $(p,u)$ and 
$$
d \big( g_{-t}(v_{e^{z'-t}(y-y')}(g_{-t}(p',u'))),g_{-t}(p',u')\big)  \longrightarrow 0
$$
as $t \to +\infty$. Taking $\varepsilon_0 > 0$ and $\varepsilon \leq \varepsilon_0/2$ as before, for $t$ big enough, Proposition~\ref{prop:homogeneus} remains valid in $\hat M$ and therefore the first condition in Definition~\ref{definition:absolute_continuity} holds in this case. With respect to the second condition, let us remark that the contraction of the volume form on $\hat M$ with the vector fields $Y$ and $Z$ is the 1-form $e^{-z}dx$ which corresponds to the length element $ds$. 

\begin{remark} \label{remarkCouden}  \begin{list}{\labelitemi}{\leftmargin=0pt} 
\item[~(i)] Why does Coud\`ene's theorem not apply in this case? The reason is simply  that the foliated horocycle flow $h_s^+$ is not transitive.

\item[~(ii)] If we consider the transversely homographic example described in \cite[Example 4.1]{Alcalde-Dal'Bo}, or more generally the suspension of any faithful discrete representation of a torsion-free Fuchsian group $\Gamma$ into $PSL(2,\R)$, the unique minimal invariant subset $\mathcal{M}$ of the right $B^+$-action is also the unique minimal invariant subset of the horocycle flow $h_s^+$. In this case, Coud\`ene's theorem can still be applied to obtain the strict ergodicity of $h_s^+$ in restriction to $\mathcal{M}$. It is not difficult to give a supplementary argument 
to prove that $h_s^+$ has no invariant measures outside of $\mathcal{M}$, and therefore that $h^+_s$ is uniquely ergodic. Anyway, Coud\`ene's theorem is useful beyond the case of foliations with transverse invariant measure. 

\item[~(iii)] This counterexample also reveals a certain analogy with the case of skew-products studied by Furstenberg in \cite{Furstenberg1} in the sense that ergodicity seems to imply unique ergodicity when the foliation is uniquely ergodic.
\end{list} 
\end{remark}

\setcounter{equation}{0}

\section{Riemannian foliations} 
\label{section:Riemannian}

Our proof of 
Proposition~\ref{thm:homogeneous} is based on the fact that 
the 
maps $\pi : \H \times G \to M$ and $\hat \pi : T^1\H\times G\to \hat M$ are local isometries when we endow 
these manifolds with the natural Riemannian metrics which we have in the homogeneous case. 
 In this section, to prove Theorem~\ref{thm:uniqueergodicity}, we will replace these global `normal unwrappings' by partial `normal unwrappings', called \emph{normal tubes}, respecting the transverse metric structure of Riemannian foliations. We will start by recalling the definition of this kind of foliations.

\subsection{Riemannian foliations}
\label{subsection:Riemannian_foliations}

\begin{definition} \label{definition:Riemannian_foliation}
A \emph{Riemannian foliation} $\F$ of codimension $m$ on a closed manifold $M$ is given by a foliated cocycle $(\{(U_i,\pi_i)\}, \{g_{ij}\})$ with values in the pseudogroup of local isometries of a Riemannian manifold  $N$ of dimension $m$. 
\end{definition}

 If $g_N$ is the Riemannian metric on $N$, then each distinguished open set $U_i$ admits a Riemannian metric $g_i$ such that the distinguished map $\pi_i : U_i \to T_i$ is a Riemannian submersion,  where $N=\cup_i T_i$. Then the bundle map $(\pi_i)_\ast : N\F|_{\textstyle U_i} \to TN|_{\textstyle T_i}$ induces an isometry from  the fibre $N_p\F$ to the tangent space $T_yN$ for each point $p \in U_i$ with $y = \pi_i(p) \in T_i \subset N$. Since the transition maps are isometries, this defines a smooth Riemannian metric tensor on $N\F$, denoted again by $g_N$. Writing $TM=T\F\oplus N\F$, we can complete $g_N$ to a smooth Riemannian metric $g$ on $M$. It is called a {\em bundle-like metric} for $\F$. Then $\F$ has the following properties (see \cite{Reinhart}):

\begin{list}{\labelitemi}{\leftmargin=20pt}

\item[(1)] Any two orthogonal curves with the same local projection have the same length and therefore the local distance between leaves becomes constant. 

\item[(2)] A geodesic is orthogonal to the foliation at one point if and only if it is orthogonal at every point. 

\end{list}

In fact, both properties characterise Riemannian foliations. The structure of Riemannian foliations has been described by P. Molino in \cite{Molino} (see also \cite{Molinobook}). 

 The Riemannian volume on $N$ is then invariant by the holonomy pseudogroup $\mathcal{P}$. Although we will not make use of this fact, it is actually the unique transverse invariant measure $\nu$ for $\F$ if $\F$ is minimal, see \cite{Matsumotoequi} for a detailed proof in the more general setting of equicontinuous laminations.

Now we will consider a Riemannian foliation $\F$ by hyperbolic surfaces on a closed manifold $M$. 
 Let $g$ be a bundle-like metric on $M$ which decomposes into the sum $g = g_T \oplus g_N$ on $TM = T\F \oplus N\F$ where $g_N$ is a smooth Riemannian metric tensor on $N\F$ glueing together the local metric tensors $g_i$. 
The metric $g$ determines a conformal class on each leaf $L$, and there is a unique complete metric of constant curvature $-1$ on this conformal class, which will be denoted by $g_L$. Glueing all these hyperbolic metrics we get the Riemannian metric tensor $g_T$ on $T\F$, which is not known to be smooth but only continuous in the smooth topology on $M$.

\subsection{Normal tubes}
\label{tubes_Riemannian_foliations}

The existence of normal tubes is true for minimal Riemannian foliations of any dimension, but we will continue to restrict to the case of surfaces for convenience:

\begin{definition} \label{definition:tube}
Let $\F$ be a Riemannian foliation on a closed manifold $M$ defined by a foliated cocycle $(\{(U_i,\pi_i)\}, \{g_{ij}\})$ with values in the pseudogroup of local isometries of a Riemannian manifold $N$. Assume the leaves of $\F$ are dense hyperbolic surfaces, and denote by $\D$ the unit disk in $\C$ and by $(T_0,y_0)$ a pointed open subset of $N$ with the metric induced by $g_N$. 
A \emph{normal tube} is a surjective foliated smooth map 
$$
\tau : (\D \times T_0, \mathcal{H}) \longrightarrow (M,\F),
$$
where $\mathcal{H}$ denotes the horizontal foliation of $\D \times T_0$, such that
\begin{list}{\labelitemi}{\leftmargin=5pt}

\item[(i)\;\;] $\tau$ is a local diffeomorphism, 

\item[(ii)\;]  $\tau |_{\D \times \{y\}}$ is the universal covering map of the leaf $L_y$ passing through $y$,   

\item[(iii)] $\tau |_{\{x\} \times T_0}$ is an isometric embedding into a local integral submanifold of $N\F$ passing through $\tau(x,y_0)$.  By a {\em local integral submanifold} we mean the image of the exponential map. In other words, for any ray $\ell$ in $T_0$ starting at the base point $y_0\in T_0$, the image $\tau(\{x\}\times\ell)$ is an isometrically embedded geodesic with respect to the bundle-like metric. In particular, the image of the ray is tangent to $N\F$ (although the whole image $\tau (\{x\}\times T_0)$ may not be). An important property is that for any point $y\in T_0$, $\tau(\D\times\{y\})$ is contained in a leaf of $\F$.

\end{list}

\end{definition}

 Although the local integrability of $N\widetilde\F$ can be used to `partially unwrap' any minimal Riemannian foliation, it will be more convenient to use groupoids as `global unwrapping' of foliations:

\begin{definition}[\cite{Winkelnkemper}]
For any smooth foliation $\F$ on a closed manifold $M$, the \emph{homotopy groupoid $\Pi_1(\F)$} is a Lie groupoid obtained as the quotient of the space $\mathcal{P}(\F)$ of paths tangent to $\F$ (with the smooth topology) by the homotopy relation. 
\end{definition}

Next we describe the differentiable structure of $\Pi_1(\F)$. For this structure, the maps $\alpha: \gamma \in  \mathcal{P}(\F) \mapsto \gamma(0) \in M$ and 
 $\beta: \gamma \in  \mathcal{P}(\F) \mapsto \gamma(1) \in M$ induce a pair of surjective submersions $\alpha : \Pi_1(\F) \to M$ and 
$\beta : \Pi_1(\F) \to M$. 
When the foliation is Riemannian, $\alpha$ and $\beta$ are Riemannian submersions, and therefore locally trivial bundles.

To endow $\Pi_1(\F)$ with a smooth structure, we will use tubes of paths
from \cite{AH}: for every pair of distinguished open sets $U \simeq P \times T$ and $U' \simeq P' \times T'$, we call \emph{tube of paths joining $U$ and $U'$} a map 
$$
\Theta : U \times P' \times [0,1] \to M
$$
such that:

\begin{list}{\labelitemi}{\leftmargin=20pt}

\item[(i)\;\;] $\Theta^0(x,y,x') = \Theta(x,y,x',0) = (x,y)$;

\item[(ii)\;]$\Theta_{(x,y,x')} : t \in [0,1] \mapsto \Theta(x,y,x',t) \in M$ is a path tangent to $\F$;

\item[(iii)] $\Theta^1(x,y,x') = \Theta(x,y,x',1)$ belongs to the local transversal in $U'$ passing through $x'$. 

\end{list}
If we identify $T$ and $T'$ with local transversals in $U$ and $U'$, the map
$$
\Theta^1 : \{x_0\} \times T \times \{x'_0\} \to T' \subset U'
$$
is the holonomy transformation corresponding to the path $\Theta_{(x_0,y_0,x_0')}$ tangent to $\F$ which joins the point 
$p = (x_0,y_0) \in U$ with the point $p' = (x'_0,y'_0) \in U$, where $y'_0 = \Theta(x_0,y_0,x'_0,1)$. 
Each tube of paths $\Theta$ defines a continuous map $$\hat{\Theta} : U \times P' \to \mathcal{P}(\F)$$ which projects in a local chart for the homotopy groupoid $\Pi_1(\F)$. When seen in these charts, it is clear that $\alpha$ and $\beta$ are submersions.
By construction, the fibre $\alpha^{-1}(p)$ is the universal cover of the leaf  
passing through $p$ and 
the restriction $\beta|_{\alpha^{-1}(p)} : \alpha^{-1}(p) \to \beta(\alpha^{-1}(p))$ is the covering map. 

Using Molino's theory, it was proved in \cite{Alcalde89} that if $\F$ is a Riemannian foliation, then $\alpha : \Pi_1(\F) \to M$ is a locally trivial bundle, see also \cite{GGHR} for another approach. This will be used in the proof of the next Proposition.

\begin{proposition} \label{tube_Riemannian}
Any minimal Riemannian foliation $\F$ by hyperbolic surfaces on a closed manifold $M$ admits a normal tube.
\end{proposition}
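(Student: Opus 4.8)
The plan is to realise the normal tube as the target map $\beta$ of the homotopy groupoid $\Pi_1(\F)$, read in a trivialisation of the source fibration $\alpha$ over a transversal that is adapted to the transverse metric. First I would fix a base point $p_0$ in a distinguished open set and let $(T_0,y_0)$ be a small transversal through $p_0$, realised in $M$ as the image under the normal exponential map of a ball in $N_{p_0}\F$; thus $T_0$ is a local integral submanifold of $N\F$ whose rays from $y_0$ are orthogonal geodesics of the bundle-like metric. The fibre $F_0=\alpha^{-1}(p_0)$ is the universal cover of the leaf $L_{p_0}$, hence diffeomorphic to $\D$ since the leaves are hyperbolic, and I fix an identification $\D\simeq F_0$. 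By the theorem of \cite{Alcalde89}, $\alpha:\Pi_1(\F)\to M$ is a locally trivial bundle, and since $T_0$ is contractible its restriction over $T_0$ trivialises. I would choose the trivialisation $\Phi:\D\times T_0\to\alpha^{-1}(T_0)$ not arbitrarily, but so that for fixed $x$ the slice $\{x\}\times T_0$ corresponds to the holonomy transport of $T_0$ along the leafwise path represented by $x$; the normal tube is then defined as $\tau=\beta\circ\Phi$.

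With this definition, properties (ii) and (iii) should come almost by construction. For fixed $y$, the restriction $\tau|_{\D\times\{y\}}$ is $\beta$ restricted to the fibre $\alpha^{-1}(y)$, which is exactly the universal covering map of the leaf $L_y$; this gives (ii) and shows that $\tau$ is foliated. For fixed $x$, the restriction $\tau|_{\{x\}\times T_0}$ is the holonomy image of $T_0$ at the endpoint $q=\tau(x,y_0)=\beta(x)$. Because $\F$ is Riemannian the holonomy acts by local isometries of $N$ and, by the two characterising properties recalled after Definition~\ref{definition:Riemannian_foliation}, carries orthogonal geodesics to orthogonal geodesics while preserving transverse distances; hence this restriction is an isometric embedding onto a normal transversal through $q$ whose rays are orthogonal geodesics, which is precisely (iii). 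The same isometric rigidity of the transverse geometry guarantees that one fixed radius for $T_0$ works for every $x$, with no shrinking.

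For (i) I would check that $d\tau$ is an isomorphism at each point $(x,y)$. Along $\D$ it is the derivative of a covering map, hence injective into $T_{\tau(x,y)}L_y\subset T\F$; along $T_0$ it is tangent to the embedded transversal $\tau(\{x\}\times T_0)$, which for $T_0$ small enough stays transverse to $\F$ because the normal exponential is transverse to the foliation near the zero section. These $2$- and $m$-dimensional images are complementary in $T_{\tau(x,y)}M$, so $\tau$ is a local diffeomorphism. Finally, surjectivity follows from minimality: since the leaf $L_{p_0}=\beta(F_0)$ is dense, the transversals of fixed radius through its points cover $M$, so $\tau$ is onto, and smoothness is inherited from $\beta$ and $\Phi$.

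I expect the main obstacle to be the adapted trivialisation of the first step, that is, making the holonomy transport of the fixed transversal $T_0$ along all of $\D$ simultaneously smooth and globally well-defined rather than a mere germ. This is exactly where the Riemannian hypothesis is indispensable: the fact that $\alpha$ and $\beta$ are Riemannian submersions together with the local triviality from \cite{Alcalde89} is what globalises the transport over the whole fibre $F_0\simeq\D$, while the isometric character of the holonomy keeps the transverse radius constant. Verifying that this transport agrees, under $\beta$, with the normal exponential transversals at every endpoint --- so that (iii) holds for all $x$ and not only at $p_0$ --- is the technical heart of the argument.
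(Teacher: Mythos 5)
Your proposal follows essentially the same route as the paper: both construct $\tau$ as $\beta$ composed with a trivialisation of the source fibration $\alpha:\Pi_1(\F)\to M$ (locally trivial by \cite{Alcalde89}), obtain surjectivity from minimality, and get the local-diffeomorphism property by a dimension count. If anything, your verification of condition (iii) for every $x$ --- via holonomy transport of $T_0$ and the fact that the holonomy pseudogroup acts by isometries defined at a uniform transverse radius --- is spelled out more fully than in the paper, which only records the isometric embedding at the basepoint corresponding to constant paths.
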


\begin{proof}
 Let $\pi_0 : U_0 \to T_0$ be a distinguished map defined on a contractible open subset $U_0\simeq P_0\times T_0$ of $M$. Then the projection $\alpha : \Pi_1(\F) \to M$ is trivialised over $U_0$, that is, there is a smooth diffeomorphism 
 $\varphi : \D \times U_0 \to \alpha^{-1}(U_0)$ such that $\alpha(\varphi (x,p)) = p$. It follows that $\beta \scirc \varphi : \D \times U_0 \to M$ is a smooth submersion, which is also surjective by minimality of $\F$. Moreover, by construction, its restriction to $\D \times \{p\}$  is the universal covering of the leaf $L_p =  \beta(\alpha^{-1}(p))$ passing through $p$. 
 Since the map $\beta  \scirc \varphi$ sends each horizontal  leaf to a leaf of $\F$ and each local transversal in $U_0$ to a local transversal to $\F$, the map 
 $$
 \tau = \beta \scirc \varphi |_{\D  \times T_0} : \D \times T_0 \to M
 $$
still is a surjective smooth submersion. But $\tau$ is actually a local diffeomorphism by dimension reasons. Thus, conditions (i) and (ii) of Definition~\ref{definition:tube} are satisfied by $\tau$. If we restrict $\tau$ to $\{x\} \times T_0$, as it is contained in the unit space $M$, we retrieve the inclusion of $T_0$ into $M$, which is obviously an isometric embedding.
 \end{proof}

\begin{remark} \label{remark:tangent_metric_riemannian}  \begin{list}{\labelitemi}{\leftmargin=0pt} 
 \item[~(i)] From the previous description, it is clear that the foliations $\alpha^\ast\F$ and $\beta^\ast\F$ induced by $\alpha$ and $\beta$ on the homotopy groupoid $\Pi_1(\F)$ are equal and the resulting foliation $\boldsymbol{\F} = \alpha^\ast\F = \beta^\ast\F$ has the same transverse structure than $\F$. Given a bundle-like metric $g$ for $\F$, the Riemannian metric $\boldsymbol{g}$ that coincides with 
$\alpha^\ast g \oplus \beta^\ast g$ on $T\boldsymbol{\F}$ and $\alpha^\ast g = \beta^\ast g = \frac 1 2 (\alpha^\ast g \oplus \beta^\ast g)$ on $N\boldsymbol{\F}$ is also bundle-like for $\boldsymbol{\F}$. In particular, 
the map $\beta: \Pi_1(\F) \to M$ is a surjective Riemannian submersion, and the same happens with 
$\beta  \scirc \varphi$.

\item[~(ii)]  Consider two leaves $\D\times\{y\}$ and $\D\times\{y'\}$ of the horizontal foliation $\mathcal{H}$. The map $(x,y)\mapsto (x,y')$ is not in general an isometry with the metrics induced by $\tau^\ast g$. Assuming that $g$ has been already uniformised, each $\D\times\{y\}$ carries a complete metric $g_y$ of constant curvature $-1$. These $g_y$
vary continuously in the smooth topology, uniformly in $x\in \D$.
\end{list}
\end{remark} 

\subsection{Unique ergodicity of the horocycle flow $h^+_s$} \label{proofMainTheorem}

In this section, we will use these normal tubes 
 to prove 
Theorem~\ref{thm:uniqueergodicity}.
The key point of the proof is to extend Proposition~\ref{prop:homogeneus} to the Riemannian case. Let $\F$ be a minimal Riemannian foliation by hyperbolic surfaces on a closed manifold $M$, and let $\tau : \D \times T_0 \to M$ be a normal tube for $\F$ obtained from Proposition~\ref{tube_Riemannian}. As in Diagram~\ref{ec:maps}, we can consider the local diffeomorphism  
$$
\hat \tau : T^1\D \times T_0 \to \hat M
$$
given by 
$$
\hat \tau(u,y) = ((\tau|_{\D \times \{y\}})_{\ast x}v,y)
$$
for each $u \in T^1_x\D$ and each $y \in T_0$. We will fix a  bundle-like metric  $g$ for $\F$ which is hyperbolic on leaves.
With the notations introduced in Remark~\ref{remark:weakly_unstable_manifold} and Definition~\ref{definition:absolute_continuity}, we have: 

\begin{proposition}  \label{prop:Riemannian}
There is $\varepsilon > 0$ such that for any point $\hat u$ of $\hat M$ there is a homeomorphism $F:\D\times T_0\to \D\times T_0$ which preserves the leaves of $\mathcal{H}$ and is smooth along the leaves such that the local weakly unstable manifolds for the foliated geodesic flow  $g_t$  are given by
 $$
W^{wu}_{\varepsilon}(\hat u) \subset \hat \tau \big( \hat F \big( W^{wu}_\varepsilon(u) \times T_0\big) \big) 
\subset W^{wu}_{2\varepsilon}(\hat u)
 $$
 when $\hat u =\hat \tau(u,y_0)$,
$u$ any vector in $T^1\D$ and $y_0$ the prescribed base point of $T_0$,  with $\hat F$ being the homeomorphism induced by $F$ on $T^1\D\times T_0$.
 \end{proposition}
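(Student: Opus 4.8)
The plan is to carry the whole computation into the normal tube $\tau:\D\times T_0\to M$ of Proposition~\ref{tube_Riemannian} and its unit-tangent lift $\hat\tau$, exactly as the homogeneous case was reduced to $T^1\H\times G$. Since a local weakly unstable manifold is defined by backward shadowing, and a backward geodesic ray together with the rays that shadow it stays inside a bounded tubular neighbourhood, the local diffeomorphism $\hat\tau$ lets me read off $W^{wu}_\varepsilon(\hat u)$ from the geometry of $\D\times T_0$; the uniformity of $\varepsilon$ over all $\hat u$ will come from the bounded geometry of the compact manifold $M$. The genuine difference with the homogeneous situation is that, by Remark~\ref{remark:tangent_metric_riemannian}(ii), the leaves $\D\times\{y\}$ carry \emph{different} hyperbolic metrics $g_y$, so the naive product $W^{wu}_\varepsilon(u)\times T_0$ is not the weakly unstable manifold and must be bent by $\hat F$. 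Throughout, $W^{wu}_\varepsilon(u)$ is understood in the central metric $g_{y_0}$, so that $W^{wu}_\varepsilon(u)=u\cdot B_\varepsilon$ as in Remark~\ref{remark:weakly_unstable_manifold}.

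First I would prove the shadowing statement one leaf at a time. Write $c_0(t)$ for the backward geodesic ray of $u$ in $(\D,g_{y_0})$, with endpoint $\xi_0$ at infinity, and transport $c_0$ into a nearby leaf $y\in T_0$ along the orthogonal geodesics furnished by part (iii) of Definition~\ref{definition:tube}. Because $\F$ is Riemannian, these orthogonal geodesics stay orthogonal and have constant length $d_N(y_0,y)$, so the transported curve $\tilde c_y$ remains at constant, small transverse distance from $c_0$ during its entire past. The key point is that $\tilde c_y$ is a quasi-geodesic of $(\D,g_y)$ with constants that are \emph{uniform along the whole infinite ray} and over $y\in T_0$ (its speed and geodesic curvature being controlled, once $T_0$ is small, by the uniform bi-Lipschitz variation of the $g_y$ and the bounded distortion of the transverse transport on the compact $M$). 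By stability of geodesics in negative curvature, $\tilde c_y$ then lies within bounded Hausdorff distance of a geodesic of $(\D,g_y)$ with a single endpoint $\xi_y$ at infinity; uniqueness of $\xi_y$ follows from the exponential divergence in backward time of hyperbolic geodesics with distinct backward endpoints. Hence every backward ray in leaf $y$ shadowing $c_0$ has endpoint $\xi_y$, and those shadowing it within $\varepsilon$ form exactly the leafwise weakly unstable manifold $W^{wu}_\varepsilon(w(y))$ of a shadowing vector $w(y)$.

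With $\xi_y$ at hand I would build $F$ explicitly. Choosing, continuously in $y$, isometries $I_y:(\D,g_y)\to\H$ that normalise $\xi_y$ and a base point, the composites $\phi_y=I_y^{-1}\scirc I_{y_0}:(\D,g_{y_0})\to(\D,g_y)$ are isometries carrying $\xi_0$ to $\xi_y$; setting $F(x,y)=(\phi_y(x),y)$ gives a leaf-preserving homeomorphism of $\D\times T_0$, smooth along the leaves, and $\hat F$ is the induced map on unit tangent vectors. Since each $\phi_y$ is an isometry between the two leaf metrics, $\hat F$ sends $W^{wu}_\varepsilon(u)$ onto $W^{wu}_\varepsilon(w(y))$ in leaf $y$, and $\hat\tau$ maps the latter to the leafwise weakly unstable manifold in $\hat M$. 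The two inclusions then close up as in the homogeneous case: a point of $\hat\tau(\hat F(W^{wu}_\varepsilon(u)\times T_0))$ shadows $\hat u$ with within-leaf error below $\varepsilon$ and transverse error below $\varepsilon$ (the latter being $d_N(y_0,y)$, small for $T_0$ small), hence lies in $W^{wu}_{2\varepsilon}(\hat u)$ by the triangle inequality; conversely any point of $W^{wu}_\varepsilon(\hat u)$ sits in some leaf $y$, and its backward ray is forced to have endpoint $\xi_y$ and to lie in $W^{wu}_\varepsilon(w(y))$, hence in the image of $\hat F$.

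I expect the main obstacle to be the leafwise shadowing of the second paragraph: showing that the transverse transport $\tilde c_y$ of an \emph{entire} backward geodesic ray is a quasi-geodesic with constants uniform over all rays and all $y\in T_0$ at once. The Riemannian structure keeps the transverse distance constant for free, but controlling the tangential drift of $\tilde c_y$ for arbitrarily large backward time — and thereby the uniqueness of $\xi_y$ — is precisely where the compactness of $M$ and the uniform continuity of the metrics $g_y$ from Remark~\ref{remark:tangent_metric_riemannian}(ii) must be used carefully. Once that uniform quasi-geodesic control is secured, everything else is the bookkeeping already rehearsed in the homogeneous proof.
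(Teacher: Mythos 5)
Your proposal takes a genuinely different route from the paper. The paper never transports geodesics transversally and never invokes quasi-geodesic stability: it writes each leaf metric $g_y$ as a conformal class $|dz+\mu_y d\bar z|$ with $\|\mu_y\|_\infty<k<1$ for $T_0$ small, and applies the Ahlfors--Bers measurable Riemann mapping theorem to produce, for each $y$, the normalised quasiconformal solution $f_y:\D\to\D$ of the Beltrami equation. This $f_y$ is automatically an isometry from $(\D,g_{y_0})$ to $(\D,g_y)$, depends continuously on $y$ in the uniform topology, and is therefore uniformly close to the identity once $T_0$ is small; setting $F(x,y)=(f_y(x),y)$, the shadowing estimate $d(g_t(v,y_0),g_t(\hat F(v,y)))<2\varepsilon$ for all $t$ is then immediate because $f_y$ carries $g_{y_0}$-geodesics to $g_y$-geodesics. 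In particular the paper's $F$ is one single map independent of $\hat u$, whereas yours is adapted to the backward endpoint $\xi_0$ of $u$. What your approach would buy is the replacement of quasiconformal machinery by the Riemannian-foliation geometry (constant transverse distance along orthogonal geodesics) together with leafwise hyperbolic geometry.

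The step where your argument is not yet a proof is exactly the one you flag, and it is more serious than ``bookkeeping''. The Morse lemma gives that a $(\lambda,0)$-quasi-geodesic lies within \emph{bounded} Hausdorff distance $R(\lambda)$ of a geodesic, but the constant produced by the standard proofs does not tend to $0$ as $\lambda\to 1$ (it carries an additive contribution of the order of the hyperbolicity constant), whereas the proposition needs the shadowing error to be $<\varepsilon$ for the prescribed small $\varepsilon$, uniformly in backward time; ``bounded Hausdorff distance'', which is what you actually invoke, does not suffice. To close this you must use the $C^2$ information: by Candel's theorem and compactness of $M$ the metrics $g_y$ converge to $g_{y_0}$ in the $C^2$ topology uniformly on $\D$, so a $g_{y_0}$-geodesic has $g_y$-geodesic curvature at most $\kappa(y)\to 0$, and a curve of geodesic curvature at most $\kappa<1$ in the hyperbolic plane stays within $\operatorname{arctanh}\kappa$ of the geodesic sharing its endpoints. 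You mention geodesic curvature in passing, but this quantitative comparison is the actual content of the proposition and must be proved, together with the matching of parametrisations (Hausdorff closeness of $\tilde c_y$ to $\phi_y\scirc c_0$ does not by itself give closeness of $\tilde c_y(-t)$ to $\phi_y(c_0(-t))$ for each $t$, which is what $W^{wu}_\varepsilon$ requires). Finally, the continuity in $y$ of $\xi_y$ and of the normalised isometries $I_y$, needed for $F$ to be a homeomorphism of $\D\times T_0$, is asserted rather than established.
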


  
\begin{figure}
\centering
\begin{tikzpicture}[line cap=round,line join=round,>=triangle 45,x=1.0cm,y=1.0cm,scale=1.05]
\clip(-8.2,-1.
5) rectangle (3.7,4.7);
\draw [rotate around={0.:(-5.5,3.)},line width=0.4pt] (-5.5,3.) ellipse (1.8030830657844836cm and 1.0005541175362629cm);
\draw [rotate around={0.:(-5.5,0.)},line width=0.4pt] (-5.5,0.) ellipse (1.7871360391181077cm and 0.9715221162252411cm);
\draw [line width=0.4pt] (-7.301993922677331,3.034771593552408)-- (-7.286911510639987,0.015399653820113275);
\draw [line width=0.4pt] (-3.6972815960394922,3.02012198887777)-- (-3.71286396088189,0.);
\draw [rotate around={0.:(0.7958569922480621,2.925546542635657)},line width=0.4pt] (0.7958569922480621,2.925546542635657) ellipse (1.8030830657844823cm and 1.0005541175362622cm);
\draw [rotate around={0.:(0.7958569922480623,-0.07445345736434293)},line width=0.4pt] (0.7958569922480623,-0.07445345736434293) ellipse (1.7871360391181124cm and 0.9715221162252438cm);
\draw [line width=0.4pt] (-1.002187357889388,3.0002953070174363)-- (-0.9836469719311594,0.015236956886861108);
\draw [line width=0.4pt] (2.5954233563654627,2.988006777698276)-- (2.5777373256253635,0.);
\draw [->,line width=1.pt] (-5.44927,3.03) -- (-4.577194189836342,3.0341169419465897);
\draw [line width=0.4pt] (0.86,3.01)-- (0.8733333333333335,-0.003333333333334837);
\draw [line width=0.4pt,color=black]
(-7.301993922677331,3.034771593552408)-- (-3.6972815960394922,3.02012198887777);
\draw [line width=0.4pt] (-5.44927,3.03)-- (-5.26067,2.1979)-- (-5.26066780913006,1.5441625178294616)-- (-4.8974802122308345,1.1083374015503922)-- (-5.024595871145563,0.4727591069767496)-- (-4.833333333333334,-0.24482956899224875);
\draw [shift={(-5.499670200352144,5.560192815486925)},dotted,line width=0.4pt,color=black]
plot[domain=4.401497905183707:5.023161276937639,variable=\t]({1.*5.843140397479143*cos(\t r)+0.*5.843140397479143*sin(\t r)},{0.*5.843140397479143*cos(\t r)+1.*5.843140397479143*sin(\t r)});
\draw [->,line width=1.pt] (-4.833333333333334,-0.24482956899224875) -- (-3.942422978811363,-0.1690362459948329);
\draw [->,line width=1.pt] (0.86,3.01) -- (1.76,3.01);
\draw [line width=0.4pt,color=black]
(-7.286911510639987,0.015399653820113275)-- (-3.71286396088189,0.);
\draw [->,line width=1.pt] (-5.436576752454775,0.006746550904390407) -- (-4.557656752454775,0.006746550904390407);
\draw [line width=0.4pt,color=black]
(-1.002187357889388,3.0002953070174363)-- (2.5954233563654627,2.988006777698276);
\draw [dotted,line width=0.4pt,color=black]
(-0.9836469719311594,0.015236956886861108)-- (2.5777373256253635,0.);
\draw [->,line width=1.pt] (0.8733333333333335,-0.003333333333334837) -- (1.7705179204134434,0.006746550904390944);
\draw [shift={(0.7671159797898673,-6.987831678012875)},line width=0.4pt,color=black]
plot[domain=1.317261520413934:1.8157744245219547,variable=\t]({1.*7.218596900938256*cos(\t r)+0.*7.218596900938256*sin(\t r)},{0.*7.218596900938256*cos(\t r)+1.*7.218596900938256*sin(\t r)});
\draw [->,line width=1.pt] (0.4033183889750277,0.2215921915589991) -- (1.3017637953488435,0.2899521681309197);
\begin{scriptsize}
\draw [fill=gray] (-5.44927,3.03) circle (2pt);
\draw [fill=gray] (-4.833333333333334,-0.24482956899224875) circle (2pt);
\draw [fill=gray] (0.86,3.01) circle (2pt);
\draw [fill=gray] (0.8733333333333335,-0.003333333333334837) circle (2pt);
\draw [fill=gray] (-5.436576752454775,0.006746550904390407) circle (2pt);
\draw [fill=gray] (0.4033183889750277,0.2215921915589991) circle (2pt);
\draw [->,color=black] (-3.2,1.5) -- (-1.3,1.5);
\node at (-5.4,3.35) {$(v,y_0)$};
\node at (0.8,3.35) {$\hat F(v,y_0) = (v,y_0)$};
\node at (0.9,-0.3) {$(v,y)$};
\node at (-5.4,0.3) {$(v,y)$};
\node at (0.35,0.55) {$\hat{F}(v,y)$};
\node at (-4.9,-0.525) {$\hat{F}^{-1}(v,y)$};
\end{scriptsize}
\node at (-7.8,3) {$g_\mathbb{D}$};
\node at (-7.8,0) {$g_\mathbb{D}$};
\node at (3.1,3) {$g_{y_0}$};
\node at (3.1,0) {$g_y$};
\node at (-2.25,1.8) {$\hat F$};
\end{tikzpicture}

 \caption{The map $\hat F$ transforming the metrics $g_y$ into the metric $g_\mathbb{D}$}  \label{figura}
\end{figure}
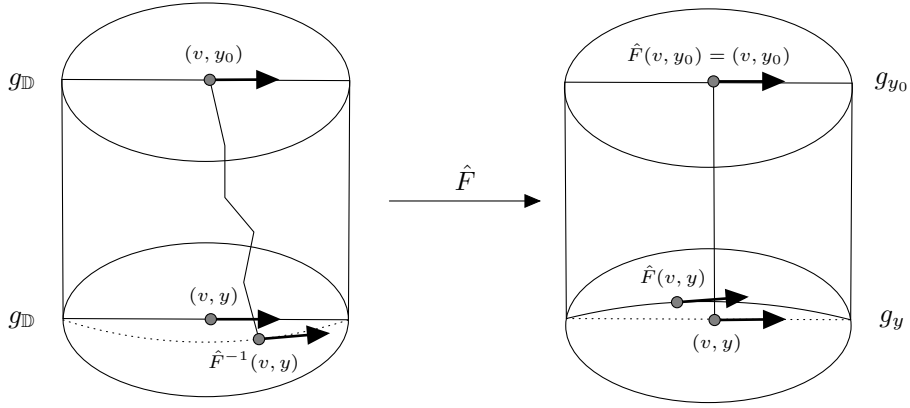


\begin{proof} Let $\varepsilon_0>0$ be such that the map $\hat \tau$ is injective on any ball of radius $\varepsilon_0$, and let $\varepsilon \leq \varepsilon_0/3$. We will also assume that the diameter of $T_0$ is smaller than $\varepsilon$. 
As we said in Remark~\ref{remark:tangent_metric_riemannian}, the leaves $\D\times\{y\}$ of the horizontal foliation $\mathcal{H}$  carry complete metrics $g_y$ of constant curvature $-1$.

Without loss of generality, we will assume that for some (fixed) $y_0\in T_0$ the metric $g_{y_0}=g_{\D}$ is the complete hyperbolic metric corresponding to the standard conformal class $|dz|$. For each $y\in T_0$, the metric $g_y$ corresponds to the class $|dz+\mu_yd\bar z|$. The Beltrami coefficient $\mu_y$ is a smooth complex-valued function on $\D$. When $y$ varied in $T_0$, the metric $g_y$ varies continously in the smooth topology, and so does $\mu_y$. Furthermore, since $M$ is compact and the leaves of the Riemannian foliation are locally at constant distance, $\mu_y$ varies continously in the uniform topology. Taking a smaller $T_0$ if necessary, we may assume that there is a $k<1$ such that $||\mu||_\infty<k$ for any $y\in T_0$.

Then by the Ahlfors-Bers theorem (see \cite{Ahlfors-Bers}), for any $y$, there is a unique $k$-quasiconformal diffeomorphism $f_y:\D\to \D$ that solves the Beltrami equation 
$$
\frac{(f_y)_z}{(f_y)_{\bar z}}=\mu_y
$$
and which fixes the three points $ -1,\, i, \, 1$. Recall that any quasiconformal map of $\D$ extends to a homeomorphism of $\bar D$. Moreover, $f_y$ depends continuously on $y$ in the uniform topology, as explained in \cite[Section 5]{Ahlfors-Bers}. Now, $f_y$ pulls back the conformal class $|dz+\mu_y d\bar z|$ to $|dz|$, and hence the hyperbolic metric $g_y$ to $g_{\D}$. This means that when $T_0$ is small the diffeomorphism $f_y$ is an isometry between $(\D, g_{\D})$ and $(\D, g_{y})$ which is close to the identity, uniformly.

Now, we define a homeomorphism 
$$
F : \D \times T_0 \to \D \times T_0
$$
by 
$$
F(x,y) = (f_y(x),y),
$$
which restricts to a smooth diffeomorphism from each horizontal leaf $\D \times \{y\}$ to itself. By construction, the metric $g_y$ on $\D \times \{y\}$ is now pulled-back to the hyperbolic metric $g_{\D}$. However, the vertical leaf $\{x\} \times T_0$ is not pulled-back to a manifold orthogonal to the leaves of $\mathcal{H}$ since its preimage by $F$ is equal to the subset $\{(f^{-1}_y(x),y) : y \in T_0\}$ of $\D \times T_0$. Nevertheless, as each diffeomorphism $f_y$ is isotopic to the identity, there is a global isotopy $H : \D \times [0,1] \times T \to \D \times T_0$ such that $H(x,0,y) = (x,y)$ and $H(x,1,y) = (f_y^{-1}(x),y)$, and its 
restriction to $\{x\} \times [0,1] \times T_0 $ defines a homotopy between $\{x\} \times T_0$ and $\{(f_y^{-1}(x),y) : y \in T_0\}$. 
By continuity, there is $\eta >0$ such that if $T_0$ is restricted to the ball of radius $\eta/2$ centred at $y_0$, then the diffeomorphism $f_y$ becomes $\varepsilon$-close to the identity in $\rm{Diff}_0(\D;1,i,-1)$, that is, 
$$
\sup \{ d(f_y^{-1}(x),x) : y \in T_0 \} < \varepsilon
$$ 
and then 
$d(f_y^{-1}(x),x) < \varepsilon$ for all $(x,y) \in \D\times T_0$.
For the Sasakian metric on $T^1\D \times T_0$ which is induced by the Riemannian metric $\tau^\ast g$ on $\D \times T_0$, we have a homeomorphism 
$$
\begin{array}{rcl}
\hat F : T^1\D \times T_0 &\to& T^1\D \times T_0\\
\hat F(v,y) &=& ((f_y)_*(v),y)
\end{array}
$$
which, in the leaf direction, pulls back the Sasakian metric corresponding to each $g_y$ to the one corresponding to $g_{\D}$ (see Figure~\ref{figura}).

Notice that since $f_y:(\D,g_{y_0})\to (\D,g_y)$ is an isometry, it sends geodesics to geodesics. This implies that for all $(v,y_0)\in T^1\D\times\{y_0\}$ and $y\in T_0$ and for all $t\in\R$
$$d(g_t(v,y_0),g_t(\hat F(v,y)) <2\varepsilon.$$
This means that the geodesic directed by $\hat F(v,y)$ {\em shadows} the geodesic directed by $(v,y_0)$, in the sense that it follows it at a small uniform distance, for the distance $d$ coming from the bundle-like metric in the normal tube.

Therefore, we have that 

$$W^{wu}_\varepsilon (\hat u)\subset \hat\tau( \hat F (W^{wu}_{\varepsilon} (u) \times T_0))\subset W^{wu}_{ 2\varepsilon}(\hat u),$$ 
as requested.
\end{proof}

Proposition~\ref{prop:Riemannian} is the analogue of Proposition~\ref{prop:homogeneus} for Riemannian foliations. Therefore, as stated in Remark~\ref{remark:proof_of_main_theorem_works_in_general_case}, the proof of Proposition~\ref{thm:homogeneous} applies to this case. 
 Let us briefly sketch the minor changes that need be made to this proof:

\begin{proof}[Proof of Theorem~\ref{thm:uniqueergodicity}]
We have to verify that we are in the hypotheses of Coudène's theorem.  As 
 in the proof of Proposition~\ref{thm:homogeneous}, we know 
 that the horocycle flow $h_s$ is transitive. We have to prove that the volume measure $\mu$ that comes from the metric $g$ is absolutely continuous with respect to $W^{wu}$. Again, the condition is semi-local, so rather than doing it in $\hat M$ we will do it in a normal tube $(\D\times T_0,\mathcal{H})$. Let $\hat u=(u,y_0)$ be a vector tangent to a leaf in this tube. According to Proposition~\ref{prop:Riemannian}, modulo a homeomorphism $F$ that is smooth along the horizontal leaves, the tube $\D\times T_0$ can be written as $F^{-1}(\D\times T_0)=\H\times T_0$, where the $\H$ factor is a plane with a fixed hyperbolic metric --the metric $g_{y_0}$ on the disk $\D$-- and the transversal $T_0$ is now a topological manifold with no metric structure. There is, nevertheless, a distance $d$ on $\H\times T_0$ that comes from the distance in the tube $\D\times T_0$.
 The unit tangent bundle to the horizontal foliation can be written as
 $T^1\H\times T_0,$
 where any two horizontal factors are isometric via the trivial map which is the identity on the first coordinate. 
 
 We consider $s\in (-1,1)$ and $\hat v=(v,y)\in W^{wu}_\varepsilon(\hat u)$, and we have to verify that
 $$W^{wu}_\varepsilon(h_s(u))\cap h_{(-2,3)}(\hat v)$$
 has exactly one point. Namely, that there are unique $l\in (-2,3)$ and $w\in W^{wu}_\varepsilon(h_s(u))$ such that $w=h_l(v)$. We are intersecting some open subsets of a weakly unstable and a stable manifold for the geodesic flow in $T^1\H\times\{y\}$, so there is at most one intersection point. Again, if the $\delta$ appearing in Definition~\ref{definition:absolute_continuity} is small enough, we can guarantee that this point exists.
 
 The condition on the disintegration of the measure $\mu$ is verified exactly as in the case of homogeneous foliations. The transverse component of the measure plays no role, and the tangential component, being invariant under both the geodesic and stable horocycle flow, has the desired property.
\end{proof}

\end{document}